\journal{Journal of \LaTeX\ Templates}
\colorlet{lightgray}{gray!40}
\newtheorem{lemma}{Lemma}
\newtheorem{definition}{Definition}
\newtheorem{theorem}{Theorem}
\newproof{proof}{Proof}
\newtheorem{corollary}{Corollary}
\newcommand{\erank}{{\rm rank}_\epsilon}
\newcommand{ \rank}{{\rm rank}}
\newcommand{\rb}[1]{ r_{#1}(B)^{-1}}
\renewcommand{\vec}[1]{\mathbf{#1}}
\newcolumntype{"}{@{\hskip\tabcolsep\vrule width 1pt\hskip\tabcolsep}}
\setlist[description]{font=\normalfont\space}
\journal{Linear Algebra and its Applications}
\begin{document}

\begin{frontmatter}
\title{On the singular values of matrices with high displacement rank}

\author[cm]{Alex Townsend\fnref{fn1}}
\ead{townsend@cornell.edu}

\author[cam]{Heather Wilber\fnref{fn2}}
\ead{hdw27@cornell.edu}

\fntext[fn1]{This work is supported by National Science Foundation grant No.~1645445.}
\fntext[fn2]{This material is based upon work supported by the National Science Foundation Graduate Research Fellowship under grant No.~DGE-1650441.}

\address[cm]{Department of Mathematics, Cornell University, Ithaca, NY  14853}
\address[cam]{Center for Applied Mathematics, Cornell University, Ithaca, NY  14853}

\begin{abstract}
We introduce a new ADI-based low rank solver for $AX-XB=F$, where $F$ has rapidly decaying singular values. Our approach results in both theoretical and practical gains, including (1) the derivation of new bounds on singular values for classes of matrices with high displacement rank, (2) a practical algorithm for solving certain Lyapunov and Sylvester matrix equations with high rank right-hand sides, and (3) a collection of low rank Poisson solvers that achieve spectral accuracy and optimal computational complexity.      
\end{abstract}

\begin{keyword}
low rank approximation \sep Sylvester matrix equation \sep  fast Poisson solver \sep alternating direction implicit method \sep  singular values \sep displacement structure   \MSC[]{65F30, 65N35, 15A18, 15A24}
\end{keyword}

\end{frontmatter}

\section{Introduction}\label{sec:Introduction}
Matrices with rapidly decaying singular values appear with extraordinary frequency in computational mathematics. Such matrices are said to have low numerical rank, and a collection of low rank approximation methods used in particle simulation~\cite{greengard1987fast}, reduced-order modeling~\cite{antoulas2005approximation, benner2005dimension} and matrix completion~\cite{candes2009exact} has developed around exploiting them. An explicit bound on the numerical rank of a matrix requires bounding its singular values, and this is generally difficult. However, bounds can be derived for families of matrices that have displacement structure~\cite{beckermann2016singular, penzl2000eigenvalue, sabino2007solution}. In this paper, we derive explicit bounds on the singular values of matrices with displacement structure in cases where known bounds fail to be informative.  Our method is constructive and leads to an efficient low rank approximation scheme that we call the factored-independent alternating direction implicit (FI-ADI) method.  It can be used, among other things, to develop fast and spectrally-accurate low rank solvers for certain elliptic partial differential equations (PDEs). 

The matrix \smash{$X \in \mathbb{C}^{m \times n}$} satisfying the Sylvester matrix equation
\begin{equation} 
\label{eq:Sylv}
AX-XB = F,  \quad A \in \mathbb{C}^{m \times m}, \quad B \in \mathbb{C}^{n \times n}, 
\end{equation} 
is said to possess displacement structure, with an $(A, B)$-displacement rank of $\rho = {\rm rank}(F)$. Explicit bounds on the singular values of structured matrices, such as Cauchy,  L{\"o}wner,  real  positive definite Hankel and real Vandermonde matrices, can be derived by observing that these matrices satisfy~\eqref{eq:Sylv} for some specific triple $(A, B, F)$, with $F$ of rank 1 or 2~\cite{beckermann2016singular}. 
Other work has focused primarily on the case where~\eqref{eq:Sylv} is a Lyapunov matrix equation (i.e., \smash{$B = -A^*$, $F = F^*$}, with \smash{$M^*$} denoting the Hermitian transpose of $M$) and $\rank(F) = 1$. Closed-form solutions, approximation by exponential sums, Cholesky factorizations, and the convergence properties of iterative methods have been used to derive  bounds on the singular values of $X$~\cite{antoulas2002decay, grasedyck2003existence, sabino2007solution, kressner2016low,  simoncini2016computational}. 

Rapidly decaying singular values imply that $X$ is well approximated by a low rank matrix.
\begin{definition} 
\label{def:erank}
Let  \smash{$X \in \mathbb{C}^{m \times n}$}, $m \geq n$, and $ 0 < \epsilon <  1$ be given. The \smash{$\epsilon$}-rank of $X$, denoted by \smash{$\erank(X)$},  is the smallest integer $k$ such that 
\[ \sigma_{k+1}(X) \leq \epsilon \; \|X\|_2,\]
where  \smash{$\sigma_{j}(X)$} denotes the $j$th singular value of $X$, \smash{$\sigma_1(X) = \|X\|_2$,} and \smash{$\sigma_{j}(X) = 0$} for $j > n$. 
\end{definition}
In this paper, we relax the assumption that $\rank(F)$ is small and assume instead that the singular values of $F$ decay rapidly, so that $\erank(F)$ is small. Such scenarios occur in numerical computing~\cite{fortunato2017fast,  townsend2016computing, wilber2017computing}, where~\eqref{eq:Sylv} arises from the discretization of certain PDEs and $F$ is associated with a smooth 2D function (see Section~\ref{sec:poisson}). Current methods only bound singular values of $X$ with indices that are multiples of $\rho$ (see Section~\ref{sec:ADI}), and therefore provide little to no information in this setting.  To overcome this issue, we derive bounds on the singular values of $X$ that depend on the singular values of $F$ directly.  Our results are rooted in two fundamental observations:

\vspace{-.2cm}

\begin{itemize}[leftmargin=*,noitemsep]
\item {\bf Splitting property:} Equation~\eqref{eq:Sylv} can be split into $\rho$ matrix equations, each with a rank 1 right-hand side. Specifically, \smash{$X = \sum_{i = 1}^\rho X_i$}, and each \smash{$X_i$} satisfies
\begin{equation}
\label{eq: rank1}
AX_i - X_i B = \sigma_i(F)u_iv_i^*, 
\end{equation}
where  \smash{$\sum_{i = 1}^{\rho} \sigma_i(F) u_iv_i^*$} is the singular value decomposition (SVD) of $F$. 
\item {\bf Bounding property:} Bounds on the singular values of \smash{$X_i$} exist that depend on the size of \smash{$\sigma_i(F)$} (see Section~\ref{sec:ExpBoundsF}).
\end{itemize}

\vspace{-.25cm} 

Since the majority of bounds in the literature are useful when $F$ is of rank 1~\cite{beckermann2016singular, sabino2007solution,penzl2000eigenvalue}, one can apply them to each of the equations in~\eqref{eq: rank1}, and then use the bounding property to exploit the decay of the singular values of $F$ (see Theorem~\ref{thm: circexp}).  Our underlying proof technique applies a modification of the alternating direction implicit (ADI) method to~\eqref{eq:Sylv} using the above two observations. In a worst-case example, this method constructs a near-best low rank approximation (see~\ref{sec:nearbest}). 

This paper is organized as follows: In Section~\ref{sec:ADI},  we use the factored ADI (fADI) method~\cite{benner2009adi} to derive known bounds when $\rank(F) \leq 2$. In Section~\ref{sec:BoundsF}, we develop a new method for bounding singular values when $F$ has rapidly decaying singular values. We discuss three examples in Section~\ref{sec:Examples}. Section~\ref{sec:FI-ADI} describes a practical method for solving~\eqref{eq:Sylv} in low rank form, and we apply this method to develop fast low rank Poisson solvers in Section~\ref{sec:poisson}.

\section{Explicit bounds on the singular values of matrices with displacement structure}
\label{sec:ADI}
Let $X$ satisfy~\eqref{eq:Sylv} with $m \geq n$, and suppose that $A$ and $B$ have no eigenvalues in common so that $X$ is the unique solution to~\eqref{eq:Sylv}~\cite{Sylvestermatrix}.  Also, assume that $A$ and $B$ are normal matrices.\footnote{Results for nonnormal matrices $A$ and $B$ are discussed in Section~\ref{sec:genFIADI}.}
Here, we show how the ADI and fADI methods can be used to derive bounds on the singular values of $X$.

\subsection{The ADI method} 
\label{sec: ADIiter}
 The ADI algorithm is an iterative method that numerically solves~\eqref{eq:Sylv} by alternately updating the column and row spaces of an approximate solution~\cite{lu1991solution, peaceman1955numerical}. 
 One ADI iteration consists of the following two steps:

\vspace{-.1cm} 
 
\begin{enumerate}[leftmargin=*,noitemsep]
\item Solve for \smash{$X^{(j+1/2)}$}, where 
\begin{equation}
\label{eq: stdADI}
\left( A - \beta_{j+1} I  \right)X^{(j+1/2)} =   X^{(j)}\left( B-\beta_{j+1} I \right)  +   F. 
\end{equation}
\item Solve for \smash{$X^{(j+1)}$}, where 
\begin{equation}
\label{eq: stdADI2}
X^{(j+1)}\left( B - \alpha_{j+1} I \right) = \left( A - \alpha_{j+1} I \right) X^{(j+1/2)}  - F.
\end{equation}
\end{enumerate}

An initial guess, \smash{$X^{(0)} =0$}, is required to begin the iterations. The construction of \smash{$X^{(k)}$} requires selecting a set of $k$ 2-tuples, \smash{$\{ ( \alpha_j, \beta_j)\}_{j =1}^k$}, referred to as \textit{shift parameters}. These parameters strongly affect the approximation error \smash{$\|X - X^{(k)}\|_2$} (see Section~\ref{sec:ADIerror}). 

\subsection{The fADI method}\label{sec:fADI_iter}
The fADI method~\cite{benner2009adi} is equivalent to ADI, but computes \smash{$X^{(k)}$} in low rank form. The fADI iteration is derived by expressing \smash{$X^{(j)}$} in terms of \smash{$X^{(j-1)}$} using~\eqref{eq: stdADI} and~\eqref{eq: stdADI2}, and then  substituting the factorizations \smash{$X^{(j)} = W^{(j)}D^{(j)}Y^{(j)^*}$} and \smash{$F = MN^*$}, where \smash{$M \in \mathbb{C}^{ m\times \rho }$} and \smash{$N \in \mathbb{C}^{n \times \rho}$}, into the resulting equation. After $k$ iterations, the following block matrices are constructed: 
\begin{align}
& W^{(k)} \!=\!\left[ \!\!\begin{array}{ c | c | c | c } \hat{W}^{(1)}\! &\!  \hat{W}^{(2)}\! &\! \cdots \!&\! \hat{W}^{(k)} \end{array}\!\! \right], \!\!\quad\!\! \label{eq:Z}
\begin{cases} 
\hat{W}^{(1)} = (A-\beta_1 I)^{-1}M, \\
\hat{W}^{(j+1)} = (A- \alpha_j I)(A-\beta_{j+1}I)^{-1}\hat{W}^{(j)},
\end{cases} \\
& Y^{(k)} \!=\! \left[ \!\!\begin{array}{ c | c | c | c }\hat{Y}^{(1)} \!&\!  \hat{Y}^{(2)} \!&\! \cdots \!&\! \hat{Y}^{(k)} \end{array}\!\! \right],\!\! \quad \!\!\begin{cases} \label{eq:Y}
\hat{Y}^{(1)} = (B^*-\overline{\alpha}_1 I)^{-1}N, \\
\hat{Y}^{(j+1)} = (B^*-\overline{\beta}_{j}I)(B^*- \overline{\alpha}_{j+1}I)^{-1}\hat{Y}^{(j)}, 
\end{cases}\\
& D^{(k)} = \textnormal{diag}\left(  (\beta_1 - \alpha_1)I_\rho , \ldots ,(\beta_k - \alpha_k)I_\rho \right) ,
\label{eq:D}
\end{align}
where \smash{$I_{\rho}$} is the $\rho \times \rho$ identity matrix. 
This method  is only computationally competitive when shifted linear solves involving $A$ and $B$ are cheap to perform, so that the total cost of the $2k\rho$ solves does not exceed the \smash{$\mathcal{O}(m^3 + n^3)$} cost of solving~\eqref{eq:Sylv} directly via the Bartels--Stewart algorithm~\cite[Ch.~7.6]{golub2012matrix}. 
 
Using fADI, one clearly sees that after $k$ iterations, the rank of the approximant \smash{$X^{(k)}$} is at most $ k\rho$.  By definition of \smash{$\sigma_j(X)$}, we conclude that
\begin{equation} 
\label{eq: ADIsvbnd}
\sigma_{k\rho+1}(X) \leq \| X - X^{(k)}\|_2, \qquad  0 \leq k\rho <  n.
\end{equation} 
The triples $(A, B, F)$ in~\eqref{eq:Sylv} satisfied by Cauchy, real-valued Vandermonde and positive definite Hankel matrices, among others, admit explicit bounds on \smash{$\| X - X^{(k)}\|_2$}~\cite{beckermann2016singular}.  Since $\rho \leq 2$ for these matrices, meaningful bounds on their singular values are supplied by~\eqref{eq: ADIsvbnd}. 
However,  if  $F$ is full rank ($\rho = n$),  no useful bound on the singular values of $X$ can be obtained from~\eqref{eq: ADIsvbnd}.  

\subsection{Bounding the ADI approximation error}\label{sec:ADIerror}
To find explicit bounds on the quantity \smash{$\|X - X^{(k)}\|_2$} in~\eqref{eq: ADIsvbnd}, we  examine 
the ADI error equation. It is given in~\cite{benner2009adi} as
\[
X - X^{(k)}  = r_k(A)(X -X^{(0)}) \rb{k}, \qquad  r_k(z) = \prod_{j = 1}^{k} \dfrac{ (z - \alpha_j)}{(z - \beta_j)}, \qquad k \geq 1.
\]
Assuming that  \smash{$X^{(0)}$} is chosen as the zero matrix, it follows that
 \begin{equation} 
\label{eq: ADIsvbndrat}
\|X-X^{(k)}\|_2 \leq \|r_k(A)\|_2 \; \|\rb{k}\|_2 \|X\|_2. 
\end{equation} 
We seek ADI shift parameters \smash{$\{ ( \alpha_j, \beta_j)\}_{j =1}^k$} that minimize \smash{$\|r_k(A)\|_2  \|\rb{k}\|_2$}.  Since $A$ and $B$ are normal matrices, we have that 
\begin{equation} 
\label{eq:bndspectrum}
\|r_k(A)\|_2 \; \| \rb{k} \|_2 \leq \sup_{z \in \lambda(A) } |r_k(z)| \sup_{ z \in  \lambda(B)} \dfrac{1}{| r_k(z) | },
\end{equation}
where $\lambda(A)$ and $\lambda(B)$ denote the spectra of $A$ and $B$, respectively. We replace the discrete sets $\lambda(A)$ and $\lambda(B)$ with closed regions $E$ and $G$ in the complex plane, where $\lambda (A) \subset E$ and $\lambda (B) \subset G$.  The optimal shift parameters are then described by the  rational  function that attains the following infimum: 
\begin{equation} 
\label{eq: rationalapprox}
Z_k(E, G): = \inf_{ r \in \mathcal{R}_k } \dfrac{ \sup_{z \in E} |r(z) | }{ \inf_{ z \in G} | r(z) |}.
\end{equation}  
Here, \smash{$\mathcal{R}_k$} is the set of rational functions of the form \smash{$r_k(z) = p_k(z)/q_k(z)$}, with polynomials \smash{$p_k$} and \smash{$q_k$}  of  degree at most $k$.\footnote{For all choices $E$ and $G$ considered in this paper, there is an extremal rational function $\tilde{r}_k = p_k/q_k$ that attains $Z_k$  with $p_k$ and $q_k$ of degree exactly $k$~\cite{akhiezer1990elements, starke1992near, zolotarev1877application}.}
 By~\eqref{eq:bndspectrum}, we have that  \smash{$Z_k(E, G)$}  bounds the ADI error as follows: 
\begin{equation} 
\label{eq:Zolobnd}
\|X-X^{(k)}\|_2 \leq Z_k(E,G) \|X\|_2. 
\end{equation}
Solutions  to~\eqref{eq: rationalapprox} and estimates on \smash{$Z_k(E,G)$} are known for certain choices of $E$ and $G$~\cite{ganelius1979rational, gonchar1969zolotarev, starke1992near, wachspress1962optimum, zolotarev1877application}. In some cases, \smash{$Z_k(E,G)$} can be expressed explicitly~\cite{beckermann2016singular} and \smash{$\|X - X^{(k)}\|_2$} is bounded outright.

\begin{figure} 
 \centering
 \begin{minipage}{.75\textwidth} 
 \centering
  \begin{overpic}[width=\textwidth]{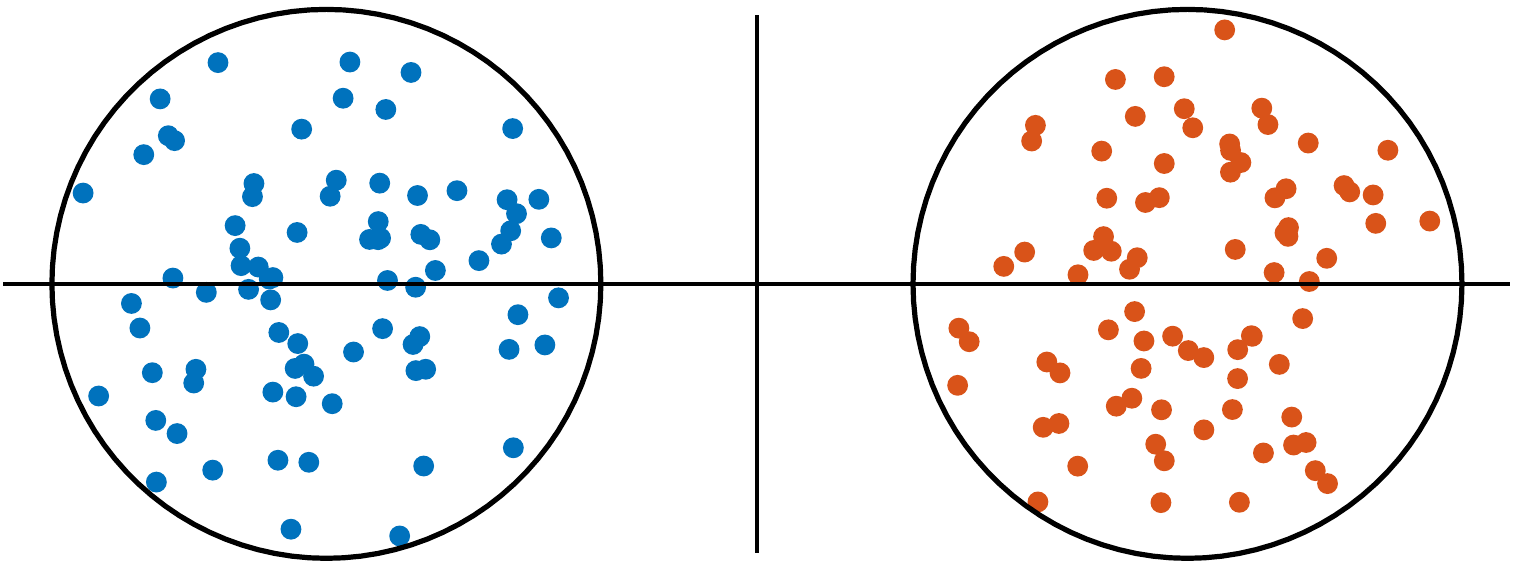}
  \put(48,37) {$Im$}
  \put(100, 18) {$Re$}
  \put (82, 25){\rotatebox{45}{ $\eta$ }}
  \put (25, 25){\rotatebox{45}{ $\eta$ }}
  \put (77, 16){$z_0$}
  \put(78, 18) {\tiny $|$}
  \put(18, 16){$-z_0$}
  \put(21.1, 18) {\tiny $|$}
  \put (86, 5) {\large $E$}
  \put (10, 5) {\large$-E$}
  \put(78.5,19){\color{black}\vector(1,1){12.5}}
  \put(21.5,19){\color{black}\vector(1,1){12.5}}
  \end{overpic}
  \end{minipage}
  \caption{The value of $Z_k(E, -E)$ is known~\cite{starke1992near} when $E = \{ z \in \mathbb{C} : |z - z_0| \leq \eta \},$
with $0 < \eta < z_0$. These results  are used in Theorem~\ref{thm: circexp} to bound the singular values of $X$ satisfying $AX - XB = F$, where $\lambda(A) \subset E$ and $\lambda(B) \subset -E$.}
  \label{fig:Cauchytwodisk}
  \end{figure}

\subsection{Explicit bounds on singular values} 
\label{sec:example1}
We now illustrate how to explicitly bound the singular values of matrices using the fADI method.  Let \smash{$C \in \mathbb{C}^{m \times n}$} be a Cauchy matrix, $m \geq n$, with entries \smash{$C_{ij} = 1/(z_i - w_j) $,} where \smash{$ \{ z_i\}_{i = 1}^{m}$} and \smash{$ \{ w_j\}_{j = 1}^{n}$} are distinct collections of complex numbers, with \smash{$ \{ z_i\}_{i = 1}^{m}$} contained in  \smash{$E: = \{  z \in \mathbb{C} :  |z - z_0| \leq \eta \}, $} $ 0 < \eta < z_0$, \smash{$z_0, \eta \in \mathbb{R}$}, and \smash{$ \{ w_j\}_{j = 1}^{n} \subset -E$.} The matrix $C$ satisfies  
\begin{equation} 
\label{eq: Cauchysylv}
D_zC - CD_w= \vec{1}, 
\end{equation}
where \smash{$D_z = {\rm diag}( z_1, \ldots,  z_m)$, $D_w={\rm diag}( w_1,  \ldots,  w_n)$}, and $\vec{1}$ is the rank 1 $m \times n$ matrix of all ones.  The eigenvalues of \smash{$D_z$} and \smash{$D_w$} lie in the disks $E$ and $-E$, respectively (see Figure~\ref{fig:Cauchytwodisk}).  Since $C$ has a displacement rank of $ \rank(\vec{1}) = 1$, we have from~\eqref{eq: ADIsvbnd} and~\eqref{eq:Zolobnd} that
\begin{equation} 
\label{eq:CbndZ}
\sigma_{k + 1}(C) \leq Z_k(E, -E) \|C\|_2, \qquad 0 \leq  k <  n.
\end{equation} 
To bound \smash{$Z_k(E, -E)$}, we apply the following result from~\cite[Sec. 2]{starke1992near}: 
\begin{theorem} 
\label{thm:equidisk}
Let \smash{$E = \{ z \in \mathbb{C} : |z - z_0| \leq \eta \}$}, \smash{$0 < \eta < z_0$}, \smash{$z_0, \eta \in \mathbb{R}$}. 
Then, the infimum in~\eqref{eq: rationalapprox} is attained by the rational function
 \begin{equation} 
 \label{eq:circleshifts}
 \tilde{r}_k(z) = \left( \dfrac{ z - \phi }{z + \phi } \right)^k, \qquad \phi = \sqrt{ z_0^2 - \eta^2},
 \end{equation}
and \smash{$Z_k(E, -E)$} is given by 
 \begin{equation}
 \label{eq:mu1}
 Z_k(E, -E) = \mu_1 ^{-k}, \qquad \mu_1  = \dfrac{ z_0 +  \phi  } { z_0 - \phi  }.
 \end{equation}
\end{theorem}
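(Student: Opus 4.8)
The plan is to reduce the two-disk problem to the classical Zolotarev problem for a pair of concentric circles via a M\"obius change of variable, and then to prove that a monomial is extremal for that problem. Introduce the M\"obius transformation
\[
T(z)=\frac{z-\phi}{z+\phi},\qquad \phi=\sqrt{z_0^2-\eta^2}\in(0,z_0).
\]
The points $\pm\phi$ are symmetric (inverse) with respect to $\partial E$, since $(\phi-z_0)(-\phi-z_0)=z_0^2-\phi^2=\eta^2$; by the reflection $z\mapsto-z$, which interchanges $E$ and $-E$ and satisfies $T(-z)=1/T(z)$, they are symmetric with respect to $\partial(-E)$ as well. Because $T$ sends $\phi\mapsto0$ and $-\phi\mapsto\infty$ and preserves symmetry, $T(\partial E)$ is a circle with $0$ and $\infty$ as inverse points, i.e.\ a circle $|w|=\rho_1$ centred at the origin; and as $\phi$ lies in the interior of $E$ (equivalently $z_0-\phi<\eta$, which follows from $\eta<z_0$), $T$ maps $E$ onto the closed disk $\{|w|\le\rho_1\}$ and, by the reflection, $-E$ onto $\{|w|\ge\rho_2\}\cup\{\infty\}$ with $\rho_2=1/\rho_1$. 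Evaluating $T$ at the real boundary points $z_0\pm\eta$ of $\partial E$ and simplifying with $\eta^2=z_0^2-\phi^2$ gives $\rho_1^2=(z_0-\phi)/(z_0+\phi)=\mu_1^{-1}$, hence $\rho_1=\mu_1^{-1/2}$ and $\rho_2=\mu_1^{1/2}$.

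Composition with $T$ is a bijection of $\mathcal{R}_k$ onto itself, so the substitution $r=s\circ T$ turns the infimum in~\eqref{eq: rationalapprox} into
\[
Z_k(E,-E)=\inf_{s\in\mathcal{R}_k}\ \frac{\sup_{|w|\le\rho_1}|s(w)|}{\inf_{|w|\ge\rho_2}|s(w)|}.
\]
A rational $s$ has a finite ratio here only if it has no pole in $\{|w|\le\rho_1\}$ and no zero in $\{|w|\ge\rho_2\}$ (including $w=\infty$); for such $s$, the maximum modulus principle applied to $s$ and to $1/s$ shows that the supremum and the infimum are attained on the circles $|w|=\rho_1$ and $|w|=\rho_2$. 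Hence it suffices to prove $M_1/m_2\ge(\rho_1/\rho_2)^k=\mu_1^{-k}$, where $M_1=\max_{|w|=\rho_1}|s(w)|$ and $m_2=\min_{|w|=\rho_2}|s(w)|$, because $s(w)=w^k$ --- equivalently $r(z)=T(z)^k=\tilde{r}_k(z)$ --- attains exactly the value $(\rho_1/\rho_2)^k=\mu_1^{-k}$; this would establish both the formula for $Z_k(E,-E)$ and the extremality of $\tilde{r}_k$.

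For the lower bound, set $I(\rho)=\frac{1}{2\pi}\int_0^{2\pi}\log|s(\rho e^{i\theta})|\,d\theta$. By Jensen's formula, as a function of $\log\rho$ the quantity $I$ is continuous and piecewise linear with derivative equal to the number of zeros of $s$ in $\{|w|<\rho\}$ minus the number of poles of $s$ in $\{|w|<\rho\}$, counted with multiplicity. Since $s$ is zero-free in $\{|w|\ge\rho_2\}$, all of its at most $k$ finite zeros lie in $\{|w|<\rho_2\}$, so this derivative is at most $k$ throughout $(\rho_1,\rho_2)$, and integrating gives $I(\rho_1)-I(\rho_2)\ge-k\log(\rho_2/\rho_1)$. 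Together with the mean-value inequalities $\log M_1\ge I(\rho_1)$ and $\log m_2\le I(\rho_2)$ this yields $M_1/m_2\ge(\rho_1/\rho_2)^k=\mu_1^{-k}$, as required.

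I expect the crux to be this last step: one must carefully track the zeros and poles of an arbitrary $s\in\mathcal{R}_k$ --- their multiplicities, the behaviour at $0$ and at $\infty$, and the reductions discarding the $s$ with infinite ratio --- so that the slope of $I$ is rigorously bounded by $k$ on the whole interval $(\rho_1,\rho_2)$. This is precisely where the degree bound $s\in\mathcal{R}_k$ enters, and where it is essential that $E$ and $-E$ are genuine disks: then the condenser $(E,-E)$ is conformally equivalent to a pair of concentric circles and its extremal rational function is a pure power, rather than the ratio of elliptic functions that arises for more general $E$ and $G$. By comparison, checking that $T$ carries the disks to concentric circles and computing $\rho_1,\rho_2$ is routine once the symmetric points $\pm\phi$ have been identified.
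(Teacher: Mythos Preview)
Your argument is correct. The paper does not actually prove this theorem: its entire proof reads ``For a proof, see Theorem~3.1 in~\cite{starke1992near} and the related discussion.'' So there is no in-house argument to compare against; you have supplied a complete, self-contained proof where the authors defer to the literature.

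Your route---sending the symmetric pair of disks to a concentric annulus via the M\"obius map fixing the inverse points $\pm\phi$, and then using the Jensen mean $I(\rho)$ to show that $w\mapsto w^k$ is extremal on the annulus---is the classical one, and is essentially what underlies Starke's treatment (which is phrased in terms of the capacity of the condenser $(E,-E)$; for two disks this collapses to the same concentric-circle computation). Your version has the merit of being entirely elementary: the inequality $I(\rho_2)-I(\rho_1)\le k\log(\rho_2/\rho_1)$ follows directly from the fact that a degree-$k$ rational function has at most $k$ zeros, with no appeal to logarithmic potential theory. The bookkeeping you flag in your final paragraph (orders at $0$ and $\infty$, possible zeros or poles on the bounding circles) is indeed where care is needed, but none of it causes trouble: the restriction to $s$ with finite ratio forces $\deg p\ge\deg q$ so that $s(\infty)\ne 0$, the finite zero count is then exactly $\deg p\le k$, and $I(\rho)$ remains finite and continuous even when a zero or pole sits on one of the circles because $\log|s|$ is locally integrable.
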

\begin{proof}
For a proof, see Theorem 3.1 in~\cite{starke1992near} and the related discussion. 
\end{proof}

Using Theorem~\ref{thm:equidisk}, we have that for $C$ in~\eqref{eq: Cauchysylv}, 
\begin{equation} 
\label{eq:Cauchybnd}
 \sigma_{k+1}(C)  \leq \mu_1 ^{-k} \|C\|_2, \qquad 0 \leq k < n.
\end{equation} 
This shows that the singular values of $C$ decay at least geometrically. A consequence of~\eqref{eq:Cauchybnd} is that for $0 < \epsilon < 1$, $\erank(C) \leq  \lceil \log(1/\epsilon) / \log(\mu_1 )\rceil$. As discussed in~\cite{beckermann2016singular}, this method can be used to find explicit bounds on singular values for several classes of matrices with low $(A,B)$-displacement rank. 

\section{Explicit bounds on the singular values of matrices with high displacement rank}\label{sec:BoundsF}
The approach in Section~\ref{sec:example1} can be uninformative. To see this, consider \smash{$\tilde{C} \in \mathbb{C}^{m \times n}$}, $m \geq n$,  with entries  \smash{$\tilde{C}_{ij} = 1/ |z_i - w_j|^2$}. The matrix \smash{$\tilde{C}$}  satisfies  
\begin{equation}
\label{eq:bars}
\overline{D}_z \tilde{C} - \tilde{C}  \overline{D}_w = C, 
\end{equation}
where \smash{$\overline{M}$} denotes entrywise complex conjugation on $M$, and $C$, \smash{$D_z$} and \smash{$D_w$} are as in~\eqref{eq: Cauchysylv}. The singular values of $C$ have rapid decay. However, the displacement rank of \smash{$\tilde{C}$} is \smash{$\rank(C) = n$}, so the bound in~\eqref{eq: ADIsvbnd} only applies to \smash{$\sigma_1(\tilde{C})$}.  This reveals nothing about whether \smash{$\tilde{C}$} has low numerical rank.  Figure~\ref{fig:Cauchy2bounds} (left) shows that the singular values of \smash{$\tilde{C}$}  decay rapidly, and we require a new approach to bound them explicitly. 

\subsection{Bounds via a modification of Smith's method}\label{sec:Smiths}
The optimal ADI shift selection strategy for solving~\eqref{eq:bars} uses the same shift parameters,  \smash{$\alpha_j = \phi$} and \smash{$\beta_j =  -\phi$}, where $\phi$ is given in~\eqref{eq:circleshifts}, at every iteration.  When this happens, the fADI method is equivalent to Smith's method~\cite{smith1968matrix}. We first consider bounding singular values in this setting. 

Eq.~\eqref{eq:bars} is a special case of~\eqref{eq:Sylv}, with \smash{$A = \overline{D}_z$} and \smash{$B = \overline{D}_w $} satisfying the assumptions in Theorem~\ref{thm:equidisk}, and $F = C$.  Applying $k$ iterations of fADI to~\eqref{eq:bars} constructs an approximant \smash{$X^{(k)} = W^{(k)}D^{(k)}Y^{(k)^*}$}, where the factors are given by~\eqref{eq:Z},~\eqref{eq:Y}, and~\eqref{eq:D}. The dimensions of \smash{$W^{(k)}$} and \smash{$Y^{(k)}$} are $m \times k\rho$ and $n \times k \rho$, respectively.  When $\rho = n$, it is often the case that these matrices have linearly dependent columns, and this leads to an overestimation of  \smash{$\erank(X)$.} However, in applying $k$ iterations of fADI to~\eqref{eq:Sylv}, several potential low rank approximants to \smash{$X$} have been generated in addition to \smash{$X^{(k)}$}.
To see this, write  \smash{$X^{(k)}$} as a sum of $k\rho$ rank 1 terms,
\begin{equation} 
\label{eq:fADIrank1}
X^{(k)} = \sum_{ i = 1}^{\rho} \sum_{j = 1}^{k} \underbrace{d_{ij} \vec{w}_{ij}  \vec{y}_{ij} ^*}_{=T_{ij}},
\end{equation}
where \smash{$\vec{w}_{ij}$} and \smash{$\vec{y}_{ij}$} are the $i$th columns of  the blocks \smash{$\hat{W}^{(j)}$} and \smash{$\hat{Y}^{(j)}$}, respectively, in~\eqref{eq:Z} and~\eqref{eq:Y},  and \smash{$d_{ij}$} is the $(i,i )$ entry of \smash{$\hat{D}^{(j)}$} in~\eqref{eq:D}. The sum in~\eqref{eq:fADIrank1} exactly recovers the solution $X$  in the limit as $k \to \infty$. 
 
We now represent $X$ by arranging the rank 1 terms in~\eqref{eq:fADIrank1} in a $\rho \times \infty$ rectangle $\mathcal{R}$, so that each  \smash{$T_{ij}$} is represented by the box in the $i$th row and $j$th column of $\mathcal{R}$ (see Figure~\ref{fig:fADI}). An approximant  can be constructed by choosing any finite collection of boxes and summing together the terms that they represent.  For example, the fADI algorithm constructs \smash{$X^{(k)}$} by summing together the terms represented in the first $k$ columns of $\mathcal{R}$, as shown in Figure~\ref{fig:fADI} (left). A natural question to ask is whether this is the best choice. 
 
To answer this question, we examine the error associated with these approximants. If \smash{$\tilde{X}_t$} is constructed from a collection \smash{$\mathcal{K}_t$} of $t$ boxes in $\mathcal{R}$, then  \smash{$\|X - \tilde{X}_t\|_2$} is bounded above by \smash{$\sum_{\{(i,j) \in \mathcal{R}\setminus\mathcal{K}_t\}} \|T_{ij}\|_2$}.  To approximately minimize the error,  we choose \smash{$\mathcal{K}_t$} by selecting terms in decreasing order of their norms. Careful examination of the fADI method reveals that \smash{$\|T_{ij}\|_2$} is influenced by \smash{$Z_{j-1}(E, -E)$} and $\sigma_i(F)$: In~\eqref{eq:Z} and~\eqref{eq:Y}, $F$  is written as \smash{$MN^*$}. Assign \smash{$M = U\Sigma$} and $N = V$, where \smash{$U \Sigma V^*$} is the SVD of $F$.  It follows that 
\begin{equation} 
\label{eq:Tnorms}
\|T_{ij}\|_2 \leq \dfrac{\phi}{2(z_0-\eta)^2}  \sigma_i(F)Z_{j-1}(E, -E), 
\end{equation}  
where $\phi$ and \smash{$\tilde{r}_{j-1}(z)$} are given in~\eqref{eq:circleshifts}.

Consider \smash{$\tilde{C}$} in~\eqref{eq:bars}. In this case,  \smash{$Z_{j-1}(E, -E)= \mu_1 ^{-(j-1)}$} by Theorem~\ref{thm:equidisk}. The right-hand side of~\eqref{eq:bars} is the matrix $C$ in~\eqref{eq: Cauchysylv}, so it follows from~\eqref{eq:Cauchybnd} that \smash{$\|T_{ij}\|_2 \leq \phi \mu_1 ^{-(i+j-2)}\|C\|_2 /(2(z_0-\eta)^2)$}. This suggests that we construct \smash{$\tilde{X}_t$} by selecting rank 1 terms along the antidiagonals of \smash{$\mathcal{R}$} (see Figure~\ref{fig:fADI} (right)). 
This strategy leads to bounds on the singular values of \smash{$\tilde{C}$} with indices that do not depend on $\rho=n$, since \smash{$\rank(\tilde{X_t})$} is at most $k(k+1)/2$, as opposed to \smash{$k \rho$},  and \smash{$\sigma_{k(k+1)/2 +1}(\tilde{C}) \leq \|\tilde{C} - \tilde{X}_t\|_2$}. The same reasoning  applies for any matrix $X\in\mathbb{C}^{m\times n}$ satisfying~\eqref{eq:Sylv}, where $A$ and $B$ are as in Theorem~\ref{thm:equidisk} and \smash{$\sigma_i(F)\leq \mu_1^{-(i-1)}\|F\|_2$}. 

\begin{figure} 
\begin{minipage}{.40\textwidth}
\begin{tikzpicture}
\node[] at (-.6,3.5) {$\sigma_1(F)$};
\node[] at (-.6,2.5) {$\sigma_2(F)$};
\node[] at (-.6,1.5) {$\sigma_3(F)$};
\node[] at (-.6,0.5) {$\sigma_4(F)$};
\draw[black,->, ultra thick](0,0)--(5.5*.9,0);
\draw[black, ultra thick] (0,0)--(0,4);
\draw[black] (4*.9,0)--(4*.9,4);
\draw[black,->, ultra thick](0,4)--(5.5*.9,4);
\draw[black] (1*.9,0)--(1*.9,4);
\draw[black] (2*.9,0)--(2*.9,4);
\draw[black] (3*.9,0)--(3*.9,4);
\draw[black] (4*.9,0)--(4*.9,4);
\draw[black,->] (0,1)--(5.5*.9,1);
\draw[black,->] (0,2)--(5.5*.9,2);
\draw[black,->] (0,3)--(5.5*.9,3);
\draw[black,->] (0,4)--(5.5*.9,4);
\draw[black] (5*.9,0)--(5*.9,4); 
\node[] at (2.5,4.3) {no.~of fADI steps};
\node[] at (.5*.9,.5) (a) {4};
\node[] at (.5*.9,1.5) (a) {3};
\node[] at (.5*.9,2.5) (a) {2};
\node[] at (.5*.9,3.5) (a) {1};
\node[] at (1.5*.9,.5) (a) {8};
\node[] at (1.5*.9,1.5) (a) {7};
\node[] at (1.5*.9,2.5) (a) {6};
\node[] at (1.5*.9,3.5) (a) {5};
\node[] at (2.5*.9,.5) (a) {12};
\node[] at (2.5*.9,1.5) (a) {11};
\node[] at (2.5*.9,2.5) (a) {10};
\node[] at (2.5*.9,3.5) (a) {9};
\node[] at (3.5*.9,.5) (a) {16};
\node[] at (3.5*.9,1.5) (a) {15};
\node[] at (3.5*.9,2.5) (a) {14};
\node[] at (3.5*.9,3.5) (a) {13};
\node[] at (4.5*.9,.5) (a) {\textcolor{lightgray}{20}};
\node[] at (4.5*.9,1.5) (a) {\textcolor{lightgray}{19}};
\node[] at (4.5*.9,2.5) (a) {\textcolor{lightgray}{18}};
\node[] at (4.5*.9,3.5) (a) {\textcolor{lightgray}{17}};
\end{tikzpicture} 
\end{minipage} 
\hspace{1cm}
\begin{minipage}{.40\textwidth}
\begin{tikzpicture}
\node[] at (-.6,3.5) {$\sigma_1(F)$};
\node[] at (-.6,2.5) {$\sigma_2(F)$};
\node[] at (-.6,1.5) {$\sigma_3(F)$};
\node[] at (-.6,0.5) {$\sigma_4(F)$};
\draw[black,->, ultra thick](0,0)--(5.5*.9,0);
\draw[black, ultra thick] (0,0)--(0,4);
\draw[black] (4*.9,0)--(4*.9,4);
\draw[black,->, ultra thick](0,4)--(5.5*.9,4);
\draw[black] (1*.9,0)--(1*.9,4);
\draw[black] (2*.9,0)--(2*.9,4);
\draw[black] (3*.9,0)--(3*.9,4);
\draw[black] (4*.9,0)--(4*.9,4);
\draw[black] (5*.9,0)--(5*.9,4);
\draw[black,->] (0,2)--(5.5*.9,2);
\draw[black,->] (0,3)--(5.5*.9,3);
\draw[black,->] (0,4)--(5.5*.9,4);
\draw[black,->] (0,1)--(5.5*.9,1);
\node[] at (2.5,4.3) {no.~of fADI steps};
\node[] at (.5*.9,.5) (a) {10};
\node[] at (.5*.9,1.5) (a) {6};
\node[] at (.5*.9,2.5) (a) {3};
\node[] at (.5*.9,3.5) (a) {1};
\node[] at (1.5*.9,.5) (a) {\textcolor{lightgray}{14}};
\node[] at (1.5*.9,1.5) (a) {9};
\node[] at (1.5*.9,2.5) (a) {5};
\node[] at (1.5*.9,3.5) (a) {2};
\node[] at (2.5*.9,.5) (a) {\textcolor{lightgray}{18}};
\node[] at (2.5*.9,1.5) (a) {\textcolor{lightgray}{13}};
\node[] at (2.5*.9,2.5) (a) {8};
\node[] at (2.5*.9,3.5) (a) {4};
\node[] at (3.5*.9,.5) (a) {\textcolor{lightgray}{22}};
\node[] at (3.5*.9,1.5) (a) {\textcolor{lightgray}{17}};
\node[] at (3.5*.9,2.5) (a) {\textcolor{lightgray}{12}};
\node[] at (3.5*.9,3.5) (a) {7};
\node[] at (4.5*.9,.5) (a) {\textcolor{lightgray}{26}};
\node[] at (4.5*.9,1.5) (a) {\textcolor{lightgray}{21}};
\node[] at (4.5*.9,2.5) (a) {\textcolor{lightgray}{16}};
\node[] at (4.5*.9,3.5) (a) {\textcolor{lightgray}{11}};
\end{tikzpicture} 
\end{minipage} 
\caption{ 
The box in the $i$th row and $j$th column represents the rank 1 term \smash{$T_{ij}$} from~\eqref{eq:fADIrank1}. The terms reduce in norm as one applies successive ADI iterations (moving to the right), but they also reduce in norm as the index of the singular values of $F$ are increased (moving down). In this illustration, we suppose that \smash{$\|T_{ij}\|_2 = \mathcal{O}(\mu_1 ^{-(i+j-2)})$} and \smash{$\rank(F) = 4$.}  Left: With $k = 4$, the fADI algorithm constructs   \smash{$X^{(k)}$}, where  \smash{$ \rank(X^{(k)}) \leq k^2= 16$}, by summing terms represented by the first \smash{$k$} columns of the rectangle. The numbering of the boxes designates the order in which the rank~1 terms are constructed via fADI; decay in the singular values of $F$ is not exploited. Right: The boxes are numbered in decreasing order with respect to their norms.  Only the first \smash{$ t = k(k+1)/2$} terms (numbered in black) are required to construct an approximant \smash{ $\tilde{X}_t$ }so that  \smash{ $\|X - \tilde{X}_t\|_2 \approx \|X - X^{(k)}\|_2$}.  }
\label{fig:fADI}
\end{figure} 

\subsubsection{Explicit bounds on singular values}\label{sec:ExpBoundsF}
We now require explicit bounds on expressions of the form \smash{$\|X - \tilde{X}_t\|_2$}. We find them using the splitting and bounding properties from Section~\ref{sec:Introduction}.

\vspace{-.25cm}

\begin{itemize}[leftmargin=*,noitemsep]
\item {\bf Applying the splitting property.} The strategy depicted in Figure~\ref{fig:fADI} (right) is equivalent to splitting~\eqref{eq:Sylv} into $\rho$ equations and applying a different number of fADI iterations to each one. The $i$th row of  $\mathcal{R}$  corresponds to the $i$th equation in~\eqref{eq: rank1}. Applying \smash{$s_i$} iterations of fADI to~\eqref{eq: rank1} results in \smash{$X_i^{(s_i)}$}, where \smash{$\| \sum_{j = s_i+1}^{\infty} T_{ij} \|_2= \|X_i - X_i^{(s_i)}\|_2$}.  The sum of these errors bounds the total error \smash{$\|X - \tilde{X}_t\|_2$}, where  \smash{$\tilde{X}_t = \sum_{i = 1}^{\rho} X_i^{(s_i)}$} and \smash{$t = \sum_{i = 1}^\rho s_i$}.   
\item {\bf Applying the bounding property.} For each \smash{$X_i$}, we have a bound of the form \smash{$\|X_i - X_i^{(s_i)}\|_2 \leq Z_{s_i}(E, -E) \|X_i\|_2$}. To find a bound that explicitly involves the singular value \smash{$\sigma_i(F)$}, we use the following result:
\end{itemize} 

\begin{lemma} 
\label{thm:Horn}
Let $X\in\mathbb{C}^{m\times n}$ satisfy $AX-XB=F$ for normal matrices $A$ and $B$. Further, suppose that $\lambda(A) \subset E$ and $\lambda(B) \subset -E$, where $E$ is the disk \smash{$E:= \{ z \in \mathbb{C} : |z - z_0| \leq \eta \},$} with $z_0, \eta\in\mathbb{R}$ and $0 < \eta < z_0$.
Then, 
\[
\|X\|_2 \leq \dfrac{ \|F\|_2}{2(z_0 - \eta)}.
\]
\end{lemma}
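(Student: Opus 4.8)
The plan is to express $X$ as an integral of copies of $F$ conjugated by matrix exponentials, and then estimate the spectral norm termwise. First I would record the geometry forced by the hypotheses: since $E$ is the disk of radius $\eta < z_0$ about $z_0 \in \mathbb{R}$, every $\lambda \in E$ satisfies $\mathrm{Re}(\lambda) \ge z_0 - \eta > 0$, and every $\mu \in -E$ satisfies $\mathrm{Re}(\mu) \le -(z_0-\eta) < 0$. In particular $\lambda(A) \cap \lambda(B) = \emptyset$, so $X$ is the unique solution of $AX - XB = F$, which legitimizes identifying $X$ with whatever object the integral below produces.

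Next I would establish the representation
\[
X = \int_0^\infty e^{-tA}\, F\, e^{tB}\, dt .
\]
The standard route is to differentiate $G(t) := e^{-tA} X e^{tB}$, giving $G'(t) = -e^{-tA}(AX-XB)e^{tB} = -e^{-tA} F e^{tB}$, and to note that $G(t) \to 0$ as $t \to \infty$: because $A$ and $B$ are normal, $\|G(t)\|_2 \le \|e^{-tA}\|_2\,\|X\|_2\,\|e^{tB}\|_2 \le e^{-2t(z_0-\eta)}\|X\|_2$. Integrating $G'$ over $[0,\infty)$ and using $G(0)=X$ then yields the displayed identity. (Equivalently, one can diagonalize the normal matrices as $A=U\Lambda_A U^*$ and $B=V\Lambda_B V^*$, reduce to the scalar relations $(\lambda_i-\mu_j)\tilde X_{ij}=\tilde F_{ij}$ with $\tilde X = U^*XV$, and substitute $1/(\lambda_i-\mu_j)=\int_0^\infty e^{-t(\lambda_i-\mu_j)}\,dt$, which is valid precisely because $\mathrm{Re}(\lambda_i-\mu_j)\ge 2(z_0-\eta)>0$; reassembling recovers the same integral.)

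Finally I would take spectral norms through the integral,
\[
\|X\|_2 \le \int_0^\infty \|e^{-tA}\|_2\,\|F\|_2\,\|e^{tB}\|_2\,dt ,
\]
and use normality once more to evaluate the exponential norms exactly: $\|e^{-tA}\|_2 = e^{-t\min_{\lambda\in\lambda(A)}\mathrm{Re}(\lambda)} \le e^{-t(z_0-\eta)}$ and $\|e^{tB}\|_2 = e^{t\max_{\mu\in\lambda(B)}\mathrm{Re}(\mu)} \le e^{-t(z_0-\eta)}$, the last inequality because $\lambda(B)\subset -E$. Substituting these bounds and computing $\int_0^\infty e^{-2t(z_0-\eta)}\,dt = 1/(2(z_0-\eta))$ gives the claimed estimate. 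The only point needing care is the convergence of the improper integral and the vanishing of $G(t)$ at infinity, but both are immediate from the uniform strip bounds $\mathrm{Re}(\lambda(A)) \ge z_0-\eta$ and $\mathrm{Re}(\lambda(B)) \le -(z_0-\eta)$ together with normality (which makes $\|e^{-tA}\|_2$ and $\|e^{tB}\|_2$ computable in closed form); there is no genuine analytic obstacle beyond tracking which half-plane each spectrum occupies.
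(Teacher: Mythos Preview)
Your proof is correct. The paper itself does not give an argument at all: it simply notes that the bound is a special case of Theorem~2.1 in Bhatia and Rosenthal's paper on separation of spectra (the reference \texttt{horn1998two}). So your route---the integral representation $X=\int_0^\infty e^{-tA}Fe^{tB}\,dt$ together with the normal-matrix identities $\|e^{-tA}\|_2=\max_{\lambda\in\lambda(A)}e^{-t\,\mathrm{Re}\,\lambda}$ and $\|e^{tB}\|_2=\max_{\mu\in\lambda(B)}e^{t\,\mathrm{Re}\,\mu}$---is genuinely different in that it is self-contained rather than a citation. What you gain is a transparent, elementary derivation that makes the role of the half-plane containment explicit and immediately shows why the constant is $1/(2(z_0-\eta))$; what the paper gains is brevity and a pointer to a result that covers more general (non-disk) spectral separation. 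Both arrive at the same inequality, and your diagonalization remark correctly identifies why the naive entrywise bound on $\tilde X_{ij}=\tilde F_{ij}/(\lambda_i-\mu_j)$ is not enough on its own and must be routed through the integral identity to recover the sharp operator-norm estimate.
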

\begin{proof}
The lemma follows as a special case of~\cite[Thm.~2.1]{horn1998two}. 
\end{proof}

Applying Lemma~\ref{thm:Horn} to~\eqref{eq: rank1}, we find that \smash{$\|X_i\|_2 \leq \sigma_i(F)/(2(z_0 - \eta))$.}
Using this result, we can now derive explicit bounds on the singular values of $X$. We begin with the case where \smash{$\sigma_{k}(F)$} decays at the same rate as \smash{$Z_k(E, -E)$.}

 \begin{theorem} 
 \label{thm: circexp}
 Let \smash{$X \in \mathbb{C}^{m \times n}$}, $m \geq n$,  satisfy $AX -XB = F$,  with $\lambda(A) \subset E$ and $ \lambda(B) \subset -E$ , where \smash{$E = \{ z \in \mathbb{C} : |z - z_0| \leq \eta \},$} with \smash{$z_0, \eta \in \mathbb{R}$} 
and \smash{$0 < \eta < z_0$.}
 Suppose that for \smash{$0 \leq j < n$}, 
 \smash{$ \sigma_{j+1}(F) \leq K  \mu_1 ^{-j} \|F\|_2,$}   where  $\mu_1$ is given in~\eqref{eq:mu1} and $K \geq 1$ is a constant. 
  For the  triangular numbers $1 \leq t =k(k+1)/2 < n$,  the singular values of $X$ are bounded in the following way: 
  \begin{equation} 
  \label{eq:THM4bnd}
  \sigma_{t+1} (X) \leq  K \dfrac{z_0 + \eta}{z_0-\eta}     \;  (\tfrac{3}{2} \sqrt{t} + 1) \mu_1 ^{-( \sqrt{8t+1}-1)/2} \; \|X\|_2.
  \end{equation}
  \end{theorem}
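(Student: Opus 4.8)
The plan is to turn the splitting and bounding properties from Section~\ref{sec:Introduction} into the ``staircase'' choice of fADI iterations depicted in Figure~\ref{fig:fADI} (right), and then to estimate the resulting error. Write $F=\sum_{i=1}^{\rho}\sigma_i(F)u_iv_i^*$ for the SVD of $F$, let $X_i$ solve the $i$th rank-one equation in~\eqref{eq: rank1}, so that $X=\sum_{i=1}^{\rho}X_i$, and fix the integer $k\ge 1$ with $t=k(k+1)/2$. I would decompose $X=\sum_{i=1}^{k}X_i+X_{>k}$, where $X_{>k}:=\sum_{i=k+1}^{\rho}X_i$.

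For $1\le i\le k$, apply $s_i:=k+1-i$ iterations of fADI to the $i$th equation, obtaining $X_i^{(s_i)}$ of rank at most $s_i$. By~\eqref{eq:Zolobnd} and Theorem~\ref{thm:equidisk}, $\|X_i-X_i^{(s_i)}\|_2\le Z_{s_i}(E,-E)\|X_i\|_2=\mu_1^{-s_i}\|X_i\|_2$; by Lemma~\ref{thm:Horn} applied to the $i$th equation, $\|X_i\|_2\le\sigma_i(F)/(2(z_0-\eta))$; and by hypothesis $\sigma_i(F)\le K\mu_1^{-(i-1)}\|F\|_2$. Since $s_i+(i-1)=k$, multiplying these three bounds gives the uniform estimate $\|X_i-X_i^{(s_i)}\|_2\le K\mu_1^{-k}\|F\|_2/(2(z_0-\eta))$ for every $i\le k$. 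The one step that is not immediate is the tail: rather than bounding $X_{>k}$ term by term, I would observe that $X_{>k}$ itself solves $AX_{>k}-X_{>k}B=F_{>k}$ with $F_{>k}:=\sum_{i>k}\sigma_i(F)u_iv_i^*$, and that $\|F_{>k}\|_2=\sigma_{k+1}(F)\le K\mu_1^{-k}\|F\|_2$ because the $u_i$ and the $v_i$ are orthonormal. Lemma~\ref{thm:Horn} then gives $\|X_{>k}\|_2\le K\mu_1^{-k}\|F\|_2/(2(z_0-\eta))$, exactly matching the per-row bound.

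Now put $\tilde X_t:=\sum_{i=1}^{k}X_i^{(s_i)}$; its rank is at most $\sum_{i=1}^{k}s_i=k(k+1)/2=t$, so $\sigma_{t+1}(X)\le\|X-\tilde X_t\|_2$. The triangle inequality over the $k$ per-row errors plus the tail error gives
\[
\|X-\tilde X_t\|_2\le (k+1)\,\frac{K\mu_1^{-k}\|F\|_2}{2(z_0-\eta)}.
\]
To pass from $\|F\|_2$ to $\|X\|_2$, use $F=AX-XB$ together with the normality of $A$ and $B$ and $\lambda(A)\subset E$, $\lambda(B)\subset -E$, which give $\|A\|_2,\|B\|_2\le z_0+\eta$ and hence $\|F\|_2\le 2(z_0+\eta)\|X\|_2$. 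Finally, $8t+1=(2k+1)^2$ yields $k=(\sqrt{8t+1}-1)/2$, while $k^2\le k(k+1)=2t$ gives $k+1\le\sqrt{2t}+1<\tfrac32\sqrt t+1$; substituting both into the displayed bound produces exactly~\eqref{eq:THM4bnd}.

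I expect the only genuine obstacle to be the handling of $X_{>k}$. The naive estimate $\|X_{>k}\|_2\le\sum_{i>k}\|X_i\|_2\le\frac{K\|F\|_2}{2(z_0-\eta)}\sum_{i>k}\mu_1^{-(i-1)}$ introduces a factor $(1-\mu_1^{-1})^{-1}$, which blows up as $\mu_1\to 1^+$ and would ruin the $\mathcal{O}(\sqrt t)$ prefactor; recognizing the SVD tail of $F$ as an admissible right-hand side, so that Lemma~\ref{thm:Horn} applies to $X_{>k}$ in one shot, is what keeps the constant in~\eqref{eq:THM4bnd} independent of $\mu_1$. The remaining points are routine: verifying $s_i\ge 1$ for $i\le k$, noting that $t<n$ forces $k<n$ so that the hypotheses on $\sigma_i(F)$ and $\sigma_{k+1}(F)$ are available, and the elementary inequalities between $k$ and $t$.
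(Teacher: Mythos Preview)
Your proposal is correct and follows essentially the same route as the paper: the paper writes the error as $S_1+S_2$, where $S_1=X_{>k}$ is handled in one shot via Lemma~\ref{thm:Horn} applied to $F_{>k}$ (exactly your key observation), and $S_2=\sum_{i=1}^{k}(X_i-X_i^{(s_i)})$ is bounded term by term using $s_i+(i-1)=k$; the remaining steps---the $(k+1)\mu_1^{-k}$ estimate, the passage $\|F\|_2\le 2(z_0+\eta)\|X\|_2$, and the substitution $k=(\sqrt{8t+1}-1)/2$ with $k\le\tfrac32\sqrt t$---match yours. The only small caveat the paper makes explicit is the convention that $X_i=0$ for $i>\rho$ when $k>\rho$, but this is harmless in your write-up as well.
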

\begin{proof}
Let \smash{$\rank(F) = \rho$}. Consider the approximant 
\smash{$ \tilde{X}_t = \sum_{i = 1}^{k} \sum_{ j = 1}^{k +1 -i} T_{ij}, $}
where \smash{$T_{ij}$} are given in~\eqref{eq:fADIrank1}. We allow the choice $k > \rho$ with the convention that for $s > \rho $, \smash{$\|T_{sj}\|_2 = 0$}.\footnote{ For expository reasons, when $k > \rho$, we do not account for the non-contribution of the terms \smash{$\|T_{sj}\|_2 = 0$} in our bounds. This  simple but notationally tedious task would improve the bounds associated with $k > \rho$.} This corresponds to selecting terms along the antidiagonals of \smash{$\mathcal{R}$} in Figure~\ref{fig:fADI}. Since \smash{$\rank(\tilde{X}_t) \leq  t =   k(k+1)/2$}, we have that \smash{$\sigma_{t+1}(X) \leq \|X - \tilde{X}_t\|_2$}.  The proof proceeds by bounding the approximation error \smash{$\|X - \tilde{X}_t\|_2$}. The error equation  is given by
\[
X - \tilde{X}_t = \underbrace{\sum_{i = k +1}^{\rho} \sum_{j = 1}^{\infty} T_{ij}}_{=S_1}+ \underbrace{\sum_{i = 1}^{k } \sum_{j = k +1-i}^{\infty} T_{ij}}_{=S_2} .
\]
Using the fact that \smash{$\sum_{j = 1}^\infty T_{ij} = X_i$}, where $X_i$ is given in~\eqref{eq: rank1}, we find that \smash{$S_1$} satisfies \smash{$AS_1 - S_1B = \sum_{i = k +1}^{\rho} \sigma_{i}(F)u_iv_i^*$}. 
It follows from Lemma~\ref{thm:Horn} that 
\begin{equation} 
\label{eq:bound1}
  \| S_1 \|_2 \leq  \dfrac{\sigma_{k+1}(F)}{ 2(z_0- \eta)}\leq \dfrac{K \|F\|_2  \mu_1 ^{-k }}{ 2(z_0-\eta) }.
 \end{equation}

To bound \smash{$\|S_2\|_2$}, observe that \smash{$S_2=  \sum_{i = 1}^{k } (X_i - X_i^{(s_i)})$}, where \smash{ $X_i^{(s_i)}$} is constructed by applying \smash{$s_i=k\!+\!1\! -\! i$} steps of fADI to~\eqref{eq: rank1}.  For each $i$, we have
\[
\|X_i - X_i^{(s_i)}\|_2\leq Z_{s_i}(E, -E) \; \|X_i\|_2 \leq  \dfrac{ \sigma_i(F)}{  2(z_0-\eta) }  \mu_1 ^{-s_i},
\]
where Lemma~\ref{thm:Horn} has been used to bound \smash{$\|X_i\|_2$}.  This implies that
\begin{equation} 
\label{eq:bound2} 
\|S_2\|_2 \leq  \dfrac{K \|F\|_2}{  2(z_0-\eta)}  \sum_{i = 1}^k \mu_1 ^{-(i-1) - s_i } \leq \dfrac{ K \|F\|_2 }{2(z_0-\eta) } k \mu_1 ^{-k }, 
\end{equation}
and~\eqref{eq:bound1} and~\eqref{eq:bound2} together give the bound
\begin{equation} 
\label{eq: bound3}
\sigma_{t+1}(X) \leq \|X - \tilde{X}_t\|_2 \leq \dfrac{K \|F\|_2}{2(z_0- \eta)} (k+1) \mu_1 ^{-k }.  
\end{equation} 
To get a relative bound, we must divide the expressions in~\eqref{eq: bound3} by \smash{$\| X\|_2$}.  Trivially, the relation $AX - XB = F$ implies that \smash{$  1/ \|X\|_2 \leq \left( \|A\|_2 + \|B\|_2 \right) / \|F\|_2$}.  Due to the assumptions on $E$ we have \smash{$\|A\|_2+ \|B\|_2 \leq 2(z_0+\eta)$}. The theorem follows from  the fact that \smash{$k = (\sqrt{ 8t + 1} -1)/2$}, and for $t \geq 1$, \smash{$k \leq 3 \sqrt{t}/2$}.
\end{proof}

In Theorem~\ref{thm: circexp}, it is assumed for convenience that $t$ is a triangular number. However, for any  $1 \leq t <  n$,  a bound on \smash{$\sigma_{t+1}(X)$} is found by bounding the sum of the first $t$ terms selected along the antidiagonals of $\mathcal{R}$ (see Figure~\ref{fig:fADI} (right)). The constants in~\eqref{eq:THM4bnd} are due to estimates on \smash{$\|X\|_2$}, and are therefore not necessarily tight. However, as shown in~\ref{sec:nearbest}, there are $A$, $B$ and $F$ satisfying Theorem~\ref{thm: circexp} so that for $1 \leq t \leq \rho(\rho+1)/2$,   \smash{$\|X - \tilde{X}_t\|_2 \approx \sigma_{t+1}(X)$.}  This implies that \smash{$\tilde{X}_t$} is a near-best low rank approximation to $X$, and that the decay rate \smash{$\mu_1^{-( \sqrt{8t+1}-1)/2}$} in~\eqref{eq:THM4bnd} cannot be improved without additional assumptions on  $A$, $B$, and $F$ (see~\ref{sec:nearbest}).   

\begin{figure}
\centering
 \begin{minipage}{.45\textwidth}
   \begin{overpic}[width=\textwidth]{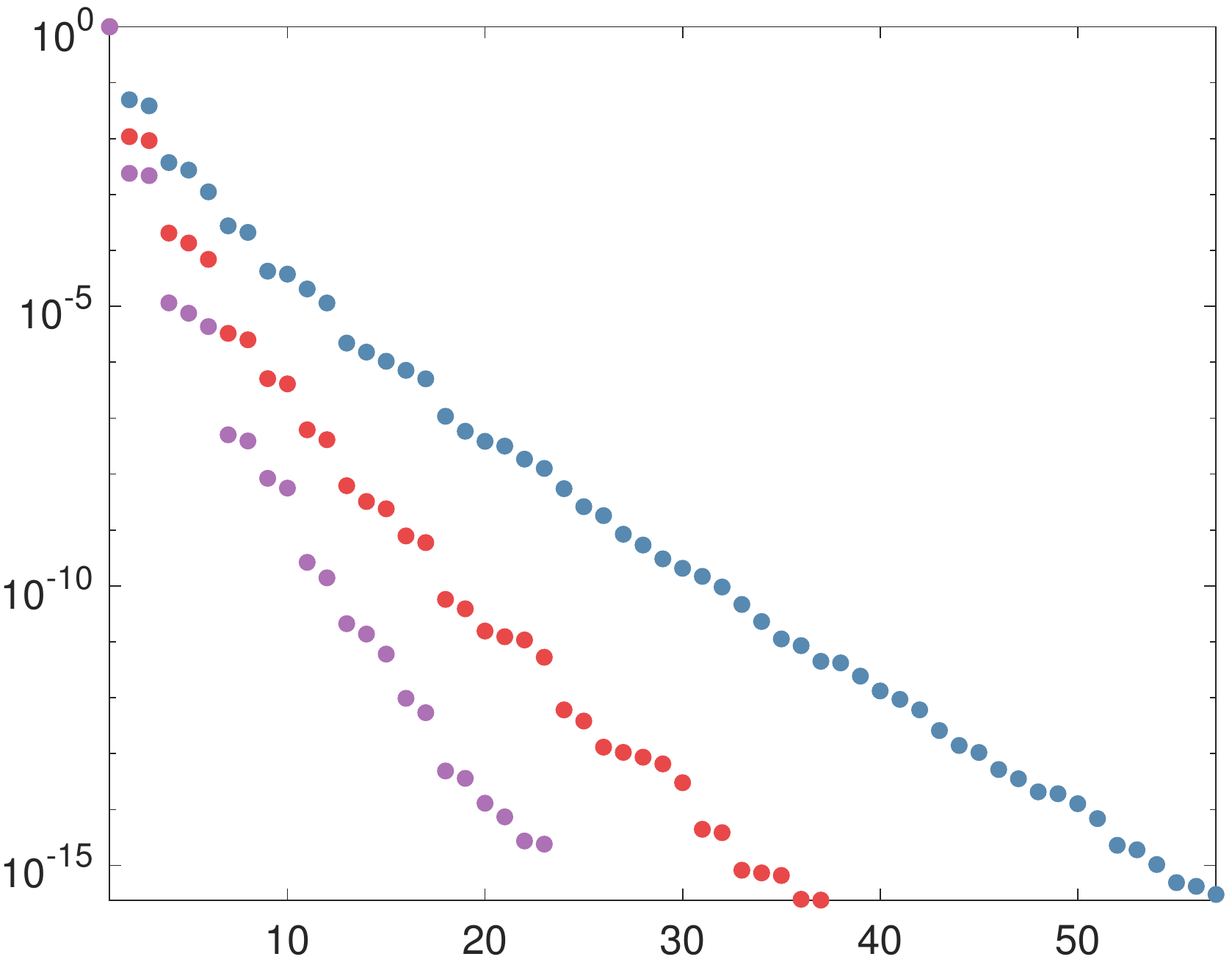}
  \put(-9,23) {\rotatebox{90}{{ \small $\sigma_t(\tilde{C})/\|\tilde{C}\|_2$}}}
  \put(50,-5) {{ \small $t$}}
  \put(38,33) {\rotatebox{-28}{{ \small $E_{30}$}}}
  \put(40,45) {\rotatebox{-30}{{ \small $E_{15}$}}}
  \put(36,19) {\rotatebox{-33}{{ \small $E_{60}$}}}
 \end{overpic}
 \end{minipage}
\begin{minipage}{.45\textwidth}
  \begin{overpic}[width=\textwidth]{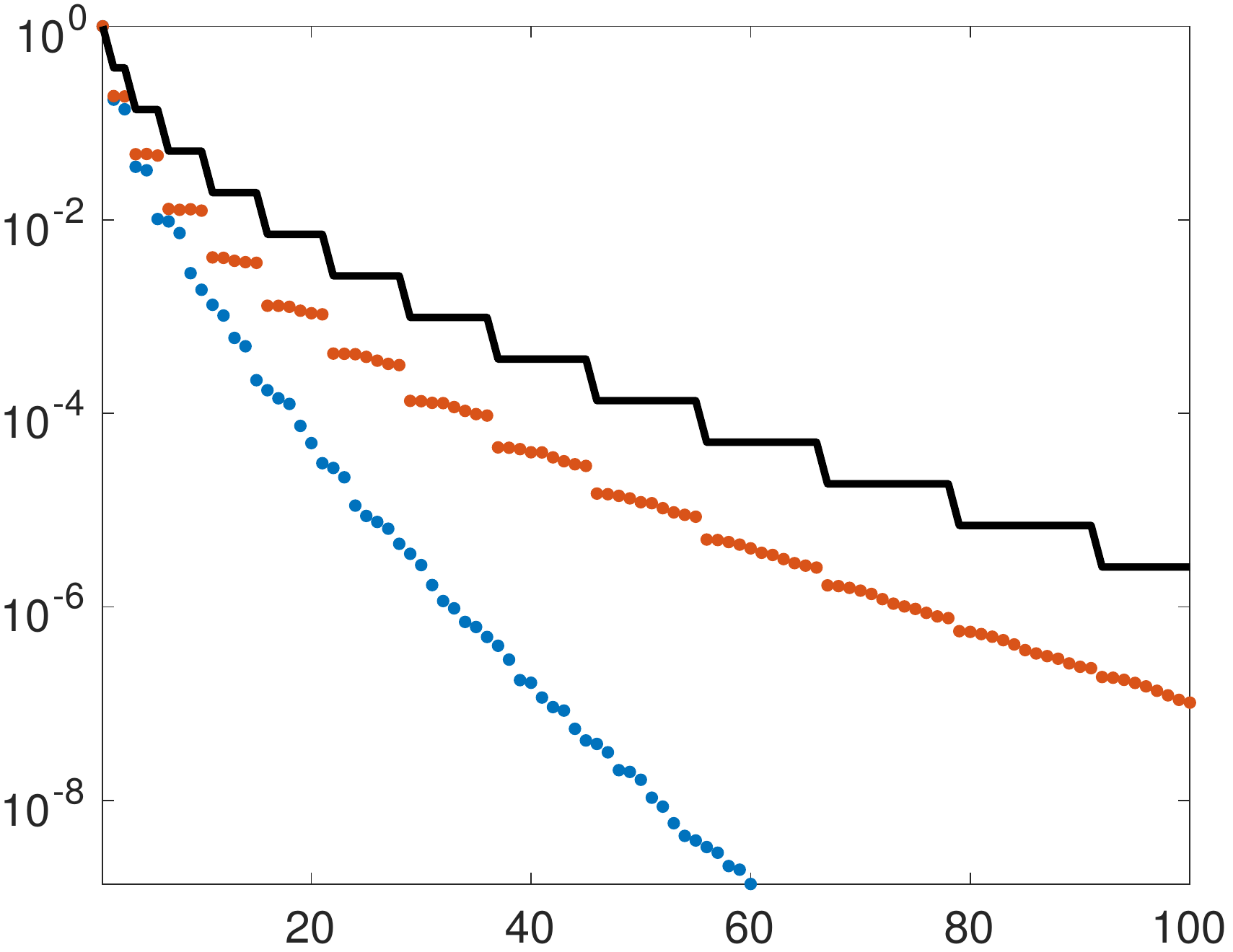}
  \put(50,-3) {\small{$t$}}
  \put(48,47){\rotatebox{-12}{\small bound decay rate}}
  \put(22,31){\rotatebox{-43}{\small $\sigma_t(\tilde{C})/\|\tilde{C}\|_2$}}
  \put(50,27){\rotatebox{-18}{\small $\|\tilde{C} - \tilde{X}_t\|_2/\|\tilde{C}\|_2$}}
 \end{overpic}
 \end{minipage}
 \caption{Left: The normalized singular values for matrices of the form \smash{$\tilde{C}_{ij} = 1/|z_i - w_j|^2$} are plotted for three different selections of sets \smash{$\{ z_i \}_{i = 1}^{100} \subset E_{\gamma}$} and  \smash{$\{ w_j \}_{j = 1}^{100} \subset -E_\gamma$} for $\gamma = 15$ (blue), $30$ (red),  and $60$ (purple), where \smash{$E_\gamma$} is a disk of radius $10$ with center $(\gamma, 0)$. Rapid decay of the singular values is observed. Right: The error from constructing a rank \smash{$t$} approximant as described in Section~\ref{sec:Smiths} (red) bounds the normalized singular values for \smash{$\tilde{C}$} of size \smash{$1000 \times 1000$} (blue), and is bounded above by the decay rate (excluding the polynomial factor) from the bound in Theorem~\ref{thm: circexp}  (black). } 
 \label{fig:Cauchy2bounds}
\end{figure}

Applying Theorem~\ref{thm: circexp} to \smash{$\tilde{C}$} shows that for triangular numbers  $t$, $1 \leq t < n$, 
\begin{equation} 
\label{eq:cauchy2bnds}
 \sigma_{t+1} (\tilde{C}) \leq  \dfrac{ z_0 + \eta}{z_0-\eta}     \;  (\tfrac{3}{2} \sqrt{t} + 1) \mu_1 ^{-( \sqrt{8t+1}-1)/2} \; \|\tilde{C}\|_2.
\end{equation}
Figure~\ref{fig:Cauchy2bounds}  displays the decay rate from~\eqref{eq:cauchy2bnds}, as well as the error \smash{$\|\tilde{C} - \tilde{X}_t\|_2 / \|\tilde{C}\|_2$}, where \smash{$\tilde{X}_t$} is constructed as in Theorem~\ref{thm: circexp}. 
These results also give a bound on \smash{$\erank(\tilde{C})$}. For $0 < \epsilon < 1$, we have that
\begin{equation} 
\label{eq:circerank2}
\erank(\tilde{C}) \leq \dfrac{  k^*(k^*+1)}{2}, \qquad  k^* = \left \lceil \log \left(  \dfrac{(z_0 + \eta) (\tfrac{3}{2}\sqrt{n}+1)}{(z_0 - \eta)  \epsilon} \right ) \bigg/ \log\mu_1  \right \rceil, 
\end{equation}
where we have used the fact that \smash{$\sqrt{t} \leq \sqrt{n}$.} 
For fixed \smash{$ 0 < \epsilon < 1$}, the bound in~\eqref{eq:circerank2} only grows polylogarithmically with $n$, so that for very large $n$, standard operations, such as matrix-vector multiplication, can be performed to an \smash{$\epsilon$}-accuracy in quasi-optimal computational complexity by using \smash{$\tilde{X}_t$}.

We need not require that the decay rate of $\sigma_i(F)$ matches the decay rate of \smash{$Z_k(E, -E)$.} As an example, suppose that $\sigma_i(F)$ decays with $i$ at a geometric rate twice that of \smash{$Z_k(E, -E)$}, i.e., \smash{$\sigma_{i+1}(F) \leq K \mu_1 ^{-2i} \|F\|_2 $.}  It is no longer optimal to construct an approximant \smash{$\tilde{X}_t$} by selecting terms along antidiagonals  of $\mathcal{R}$ (see Figure~\ref{fig:fADI} (right)). Instead, the number of fADI iterations applied to each $X_i$ in~\eqref{eq: rank1} (each row of $\mathcal{R}$) must be modified.  Specifically, for each \smash{$X_i$},  construct \smash{$X_i^{(s_i)} $} with \smash{$s_i = 2k - 2(i-1)$} to form  \smash{$\tilde{X}_t = \sum_{i = 1}^k X^{(s_i)}$.}   If, on the other hand,  \smash{$\sigma_{i+1}(F) \leq K  \mu_1 ^{-i/2} \|F\|_2$},  then \smash{$\tilde{X}_t$} is constructed by performing \smash{$s_i = k\!+\!1\!-\!i$} iterations of fADI on  \smash{$X_i$} and \smash{$X_{i+1}$ }simultaneously. 
Generalizing from these examples, we have the following corollary:
\begin{corollary} 
\label{cor: thm4corr}
Suppose that the assumptions of Theorem~\ref{thm: circexp} hold, except that  \smash{$\sigma_{i+1}(F) \leq K   \mu_F^{-i} \|F\|_2$}, with \smash{$\mu_F > 1$}.   Let \smash{$\mu = \min ( \mu_F, \mu_1  )$}, and define the integer $\ell$ as  \smash{$\ell = \left \lfloor  \log \left( \max ( \mu_F, \mu_1  ) \right) / \log \mu \right \rfloor$}.  Then, for the numbers  $1 \leq t =\ell k(k+1)/2 < n$, the singular values of $X$ are bounded as 
\[
\sigma_{t+1} (X) \leq  K \dfrac{ z_0 + \eta}{z_0-\eta}     \;  (\tfrac{3}{2}\sqrt{t} + 1) \mu^{-\ell( \sqrt{8t+1}-1)/2} \; \|X\|_2.
\]
\end{corollary}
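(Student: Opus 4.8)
The plan is to reduce Corollary~\ref{cor: thm4corr} to a bookkeeping exercise on which boxes of $\mathcal{R}$ to select, following the same template as the proof of Theorem~\ref{thm: circexp}. The key observation is that $\|T_{ij}\|_2$ is controlled by two geometric factors: $\sigma_i(F) \leq K\mu_F^{-(i-1)}\|F\|_2$ contributes decay at rate $\mu_F$ along rows, and $Z_{j-1}(E,-E) = \mu_1^{-(j-1)}$ contributes decay at rate $\mu_1$ along columns (by Theorem~\ref{thm:equidisk} and~\eqref{eq:Tnorms}). After combining these, each term decays like $\mu^{-\,(\text{linear combination of }i\text{ and }j)}$, where $\mu = \min(\mu_F,\mu_1)$; the integer $\ell = \lfloor \log(\max(\mu_F,\mu_1))/\log\mu \rfloor$ simply records how many iterations in the faster direction are ``worth'' one iteration in the slower direction. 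So the optimal selection region in $\mathcal{R}$ is no longer the triangle of antidiagonals but a triangle stretched by the factor $\ell$ in the faster direction, as illustrated by the two worked examples ($s_i = 2k - 2(i-1)$ and pairing rows) immediately preceding the corollary.

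The steps I would carry out are as follows. First, assume without loss of generality that $\mu_1 \leq \mu_F$ (so $\mu = \mu_1$); the other case is symmetric after transposing the roles of rows and columns of $\mathcal{R}$, i.e.\ swapping which of the two equations~\eqref{eq:Sylv} factors gets more ADI steps. Second, define the approximant $\tilde{X}_t$ by selecting, for each row $i = 1,\dots,k$, the first $s_i = \ell(k+1-i)$ boxes $T_{i1},\dots,T_{i,s_i}$, so that $\tilde{X}_t = \sum_{i=1}^k X_i^{(s_i)}$ with total count $t = \ell\sum_{i=1}^k (k+1-i) = \ell k(k+1)/2$, and hence $\operatorname{rank}(\tilde X_t) \leq t$ and $\sigma_{t+1}(X) \leq \|X - \tilde X_t\|_2$. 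Third, split the error as $X - \tilde X_t = S_1 + S_2$ exactly as in Theorem~\ref{thm: circexp}, where $S_1 = \sum_{i=k+1}^{\rho}\sum_{j\geq 1} T_{ij}$ and $S_2 = \sum_{i=1}^k \sum_{j > s_i} T_{ij} = \sum_{i=1}^k (X_i - X_i^{(s_i)})$. Fourth, bound $\|S_1\|_2$ via Lemma~\ref{thm:Horn} applied to $AS_1 - S_1 B = \sum_{i=k+1}^\rho \sigma_i(F) u_i v_i^*$, getting $\|S_1\|_2 \leq \sigma_{k+1}(F)/(2(z_0-\eta)) \leq K\|F\|_2 \mu_F^{-k}/(2(z_0-\eta)) \leq K\|F\|_2\mu^{-\ell k}/(2(z_0-\eta))$, using $\mu_F^{-k} = \mu^{-k\log\mu_F/\log\mu} \leq \mu^{-\ell k}$ since $\log\mu_F/\log\mu \geq \ell$. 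Fifth, bound $\|S_2\|_2 \leq \sum_{i=1}^k Z_{s_i}(E,-E)\|X_i\|_2 \leq \frac{K\|F\|_2}{2(z_0-\eta)}\sum_{i=1}^k \mu_F^{-(i-1)}\mu_1^{-s_i}$; since $\mu_F \geq \mu_1 = \mu$ and $s_i = \ell(k+1-i)$, each summand is at most $\mu^{-(i-1)}\mu^{-\ell(k+1-i)} \leq \mu^{-\ell k}$ (here $\ell \geq 1$ forces the exponent $\geq \ell k$), giving $\|S_2\|_2 \leq \frac{K\|F\|_2}{2(z_0-\eta)} k\mu^{-\ell k}$. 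Sixth, combine to get $\sigma_{t+1}(X) \leq \frac{K\|F\|_2}{2(z_0-\eta)}(k+1)\mu^{-\ell k}$, divide by $\|X\|_2$ using $1/\|X\|_2 \leq (\|A\|_2+\|B\|_2)/\|F\|_2 \leq 2(z_0+\eta)/\|F\|_2$, and finally substitute $k = (\sqrt{8(t/\ell)+1}-1)/2$ together with $k+1 \leq \tfrac32\sqrt{t/\ell}+1 \leq \tfrac32\sqrt t + 1$ and $\mu^{-\ell k} = \mu^{-\ell(\sqrt{8(t/\ell)+1}-1)/2}$. To match the stated form $\mu^{-\ell(\sqrt{8t+1}-1)/2}$ I would note that since $\ell\geq 1$ we have $\sqrt{8t/\ell+1} \geq \sqrt{8t+1}/\sqrt\ell$ is the wrong direction, so in fact the honest bound has $t/\ell$ under the root; I expect the corollary intends $t = \ell k(k+1)/2$ to be substituted so that the exponent is $\ell k$ with $k$ expressed via $t$, and the cleanest statement keeps $k(k+1)/2 = t/\ell$, giving decay $\mu^{-\ell k}$ with $k = (\sqrt{8t/\ell+1}-1)/2$; I would either present it this way or absorb the $\sqrt\ell$ discrepancy into the constant.

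The main obstacle is the fifth step — verifying that the stretched-triangle selection is genuinely the error-minimizing choice (or at least good enough), i.e.\ that every discarded term $T_{ij}$ with $i \leq k$, $j > s_i$ or with $i > k$ contributes at most $\mu^{-\ell k}$ up to the geometric-series constant. This requires checking that the ``slope'' $\ell$ of the selection boundary correctly balances the two decay rates: with $s_i = \ell(k+1-i)$, the first discarded box in row $i\leq k$ has combined exponent $(i-1) + \ell(k+1-i) = \ell k - (\ell-1)(i-1) \geq \ell k - (\ell-1)(k-1)= k + \ell - 1 \geq \ell k$ only when... wait, this needs $k \geq $ something; more carefully the minimum over $i \in \{1,\dots,k\}$ of $(i-1)+\ell(k+1-i)$ is attained at $i=k$, giving exponent $k-1+\ell$, which is $\geq \ell k$ iff $k \leq 1$ — so the correct statement is that $\|S_2\|_2 \leq \frac{K\|F\|_2}{2(z_0-\eta)}\sum_{i=1}^k \mu^{-(k-1+\ell)} \cdot(\text{geom. tail}) $, i.e.\ the dominant surviving term sets the rate, and one absorbs the polynomial-in-$k$ factor exactly as in Theorem~\ref{thm: circexp}. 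Getting this indexing right — and confirming the footnoted convention $\|T_{sj}\|_2 = 0$ for $s > \rho$ when $k > \rho$ causes no trouble — is the only delicate part; everything else is a direct transcription of the Theorem~\ref{thm: circexp} argument with $k$ replaced by $\ell k$ in the exponents and $t = \ell k(k+1)/2$ in the index.
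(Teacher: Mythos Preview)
Your approach is exactly the paper's: the corollary is stated after two worked examples (the cases $\mu_F=\mu_1^2$ and $\mu_F=\mu_1^{1/2}$), and the intended proof is precisely the stretched-triangle selection $s_i=\ell(k+1-i)$ (or its transpose), followed by the same $S_1+S_2$ decomposition and the same estimates as in Theorem~\ref{thm: circexp}. So the strategy is correct and matches the paper.

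However, your step~5 contains a real slip that causes all the confusion in your ``main obstacle'' paragraph. You bound $\mu_F^{-(i-1)}$ above by $\mu^{-(i-1)}$, using only $\mu_F\geq\mu$. That is valid but throws away the very information encoded in $\ell$: by definition $\ell=\lfloor\log\mu_F/\log\mu\rfloor$, so $\mu_F\geq\mu^{\ell}$, and hence
\[
\mu_F^{-(i-1)}\;\leq\;\mu^{-\ell(i-1)}.
\]
With this sharper replacement the summand becomes $\mu^{-\ell(i-1)}\mu_1^{-s_i}=\mu^{-\ell(i-1)-\ell(k+1-i)}=\mu^{-\ell k}$ \emph{exactly}, for every $i$, and $\|S_2\|_2\leq \tfrac{K\|F\|_2}{2(z_0-\eta)}\,k\,\mu^{-\ell k}$ follows immediately. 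Your computation of $(i-1)+\ell(k+1-i)$ and the resulting worry that the minimum is only $k+\ell-1$ are artifacts of the crude bound; once you use $\mu_F\geq\mu^\ell$ there is nothing to balance and no obstacle. (The same relation handles $S_1$: $\mu_F^{-k}\leq\mu^{-\ell k}$, as you already wrote.)

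Your closing observation about the exponent is legitimate: from $t=\ell k(k+1)/2$ one gets $k=(\sqrt{8t/\ell+1}-1)/2$, so the honest decay rate is $\mu^{-\ell(\sqrt{8t/\ell+1}-1)/2}$, which for $\ell>1$ is weaker than the stated $\mu^{-\ell(\sqrt{8t+1}-1)/2}$. The paper does not give a formal proof of the corollary, so you are right to flag this; presenting the bound with $t/\ell$ under the radical (equivalently, writing the exponent as $-\ell k$ with $k$ determined by $t=\ell k(k+1)/2$) is the clean and correct formulation.
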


Further generalizations of Theorem~\ref{thm: circexp} hold whenever explicit bounds are known for the singular values of $F$, even if the rate of decay is not geometric. For example, with the same assumptions as Theorem~\ref{thm: circexp} except that the singular values of $F$ decay algebraically, the singular values of $X$ can be shown to decay at the same algebraic rate. 

A generalization of Theorem~\ref{thm: circexp} and Corollary~\ref{cor: thm4corr} can be stated when $E$ and $G$ are any two closed disks in the complex plane that are disjoint from each other. This follows from Theorem 3.1 in~\cite{starke1992near}, where \smash{$Z_k(E, G)$} and \smash{$\tilde{r}_k$} are given for disks $E$ and $G$ that are each symmetric about the real axis, as well as the observation that \smash{$Z_k(E, G)$} is invariant under rotation.

\subsection{Bounds via a modification of fADI} 
We now consider $AX - XB = F$, with $\lambda(A) \subset [-b,-a]$, $\lambda(B) \subset [a,b]$ for  $a,b\in\mathbb{R}$ and $0 < a < b$.  This scenario arises, for example, in the discretization of Poisson's equation (see Section~\ref{sec:poisson}). The extremal rational function $\tilde{r}_k$ that attains the infimum $Z_k([-b,-a], [a,b])$  in~\eqref{eq: rationalapprox} is known~\cite{akhiezer1990elements,wachspress1962optimum,zolotarev1877application}, and an upper bound on $Z_k([-b,-a], [a,b])$ is given in~\cite{beckermann2016singular} as 
\begin{equation} 
\label{eq: Zbndinterval}
Z_k([-b,-a],[a,b]) \leq 4\mu_2^{-k}, \qquad \mu_2 = \exp\left( \dfrac{\pi^2}{ \log(4 b/a)} \right) .
\end{equation} 
The zeros and poles of  \smash{$\tilde{r}_k(z)$}  can be computed using elliptic integrals~\cite{lu1991solution, zolotarev1877application}, and they form a set of $k$ ADI shift parameters \smash{$\{(\alpha_{\ell}, \beta_{\ell})\}_{\ell = 1}^k$.} In contrast to Section~\ref{sec:Smiths}, one cannot expect that the extremal function \smash{$\tilde{r}_{j}(z)$} that attains the infimum $Z_j([-b,-a],[a,b])$ has any zeros or poles in common with \smash{$\tilde{r}_k(z)$} when $j\neq k$. To use explicit bounds associated with \smash{$Z_j([-b,-a],[a,b])$} for $1 \leq j \leq k$,  we must allow for the use of several sets of shift parameters when constructing our ADI-based approximant \smash{$\tilde{X}_t$}. 
This is a natural generalization of the approach used in Theorem~\ref{thm: circexp}, and it leads to the following theorem: 
\begin{theorem} 
 \label{thm: intervalexp}
 Let \smash{$X \in \mathbb{C}^{m \times n}$}, $m \geq n$, satisfy $AX-XB = F$, and suppose that the assumptions in Theorem~\ref{thm: circexp} hold, except that $\lambda(A) \subset [-b,-a]$ and $\lambda(B) \subset [a,b]$, with $a,b\in \mathbb{R}$ and $0 < a < b$. 
 Then, for the triangular numbers $ 1 \leq t =  k(k+1)/2 < n$, we have
\[
\sigma_{t + 1}(X) \leq \dfrac{ K b}{a} (6 \sqrt{t} +1) \mu_2^{-  (\sqrt{ 8t + 1} -1)/2} \|X\|_2.
\]
 \end{theorem}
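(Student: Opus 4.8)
The plan is to run the argument of Theorem~\ref{thm: circexp} verbatim, with two changes: the disk geometry is replaced by the interval geometry (so Lemma~\ref{thm:Horn} is used with separation $2a$ in place of $2(z_0-\eta)$, and $\|A\|_2,\|B\|_2\le b$ in place of $z_0+\eta$), and the constant-shift Smith iteration is replaced by an fADI process that uses a \emph{different} set of shift parameters in each of the split equations~\eqref{eq: rank1}. Fix $k$ with $t=k(k+1)/2<n$, set $s_i=k+1-i$ for $1\le i\le k$, and, with the convention $X_i=0$ for $i>\rho=\rank(F)$, form the approximant $\tilde{X}_t=\sum_{i=1}^{k}X_i^{(s_i)}$, where $X_i^{(s_i)}$ is obtained by applying $s_i$ iterations of fADI to the $i$th rank~1 equation in~\eqref{eq: rank1}, using the $s_i$ ADI shifts given by the zeros and poles of the extremal rational function $\tilde{r}_{s_i}$ for $Z_{s_i}([-b,-a],[a,b])$. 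Since $\rank(X_i^{(s_i)})\le s_i$, the approximant has rank at most $\sum_{i=1}^{k}(k+1-i)=k(k+1)/2=t$, so $\sigma_{t+1}(X)\le\|X-\tilde{X}_t\|_2$, and it remains to bound the right-hand side. This is precisely the antidiagonal strategy of Figure~\ref{fig:fADI} (right), except that the single fADI sequence underlying the rectangle $\mathcal{R}$ must now be replaced by $k$ independent fADI runs, one per row, because the Zolotarev extremal functions for different degrees on $[-b,-a]$ and $[a,b]$ no longer share zeros and poles.

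Next I would split the error exactly as in Theorem~\ref{thm: circexp}. Writing $X_i^{(s_i)}=\sum_{j=1}^{s_i}T_{ij}$ and using $\sum_{j\ge1}T_{ij}=X_i$, we get $X-\tilde{X}_t=S_1+S_2$ with $S_1=\sum_{i>k}X_i$ and $S_2=\sum_{i=1}^{k}(X_i-X_i^{(s_i)})$. The term $S_1$ satisfies $AS_1-S_1B=\sum_{i>k}\sigma_i(F)u_iv_i^*$, and since $\mathrm{dist}([-b,-a],[a,b])=2a$, the interval form of the bound of~\cite[Thm.~2.1]{horn1998two} (the same source cited for Lemma~\ref{thm:Horn}) gives $\|S_1\|_2\le\sigma_{k+1}(F)/(2a)\le K\|F\|_2\,\mu_2^{-k}/(2a)$, where I use the hypothesis on the singular values of $F$, which in this setting reads $\sigma_{j+1}(F)\le K\mu_2^{-j}\|F\|_2$. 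For $S_2$, each summand obeys $\|X_i-X_i^{(s_i)}\|_2\le Z_{s_i}([-b,-a],[a,b])\,\|X_i\|_2\le 4\mu_2^{-s_i}\cdot\sigma_i(F)/(2a)$, combining~\eqref{eq: Zbndinterval} with the same Horn bound applied to~\eqref{eq: rank1}; summing over $i$ and using $(i-1)+s_i=k$ yields $\|S_2\|_2\le(4K\|F\|_2/(2a))\,k\,\mu_2^{-k}$.

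Adding the two estimates gives $\sigma_{t+1}(X)\le(K\|F\|_2/(2a))(4k+1)\mu_2^{-k}$. To pass to a relative bound I would argue as in Theorem~\ref{thm: circexp}: the identity $AX-XB=F$ forces $1/\|X\|_2\le(\|A\|_2+\|B\|_2)/\|F\|_2\le 2b/\|F\|_2$ since $\|A\|_2,\|B\|_2\le b$, so $\sigma_{t+1}(X)\le K(b/a)(4k+1)\mu_2^{-k}\|X\|_2$. Finally, substituting $k=(\sqrt{8t+1}-1)/2$ and using $4k\le 6\sqrt{t}$ (because $k\le\tfrac{3}{2}\sqrt{t}$ for $t\ge1$) produces the claimed inequality.

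The proof is essentially a constant-tracking exercise; the only genuinely new structural point — and the main thing I would want to check carefully — is that replacing the shared fADI sequence by $k$ separate fADI runs with row-dependent shift sets leaves both the rank count $\rank(\tilde{X}_t)\le t$ and the splitting identity $X-\tilde{X}_t=S_1+S_2$ intact; this is the content of the phrase ``modification of fADI'' and is why the result is not an immediate corollary of Theorem~\ref{thm: circexp}. The remaining differences are purely numerical: the factor $4$ in~\eqref{eq: Zbndinterval}, which is absent in the disk case, is what turns the $\tfrac{3}{2}\sqrt{t}$ of~\eqref{eq:THM4bnd} into $6\sqrt{t}$, and the separation $2a$ and norm bound $2b$ replace $2(z_0-\eta)$ and $2(z_0+\eta)$. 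A generalization to arbitrary disjoint intervals, or to disjoint real-symmetric sets for which $Z_k$ estimates are known, follows the same template.
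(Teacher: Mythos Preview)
Your proposal is correct and follows essentially the same approach as the paper's proof: split $X=\sum_i X_i$, apply $s_i=k+1-i$ fADI iterations to each rank~1 equation with row-dependent optimal shift parameters, and bound $\|S_1\|_2$ and $\|S_2\|_2$ exactly as in Theorem~\ref{thm: circexp}, invoking Lemma~\ref{thm:Horn} via the enclosing disk with $z_0=(b+a)/2$, $\eta=(b-a)/2$ (so $2(z_0-\eta)=2a$ and $z_0+\eta=b$). You have in fact written out the constant-tracking details---including the extra factor~$4$ from~\eqref{eq: Zbndinterval} that enters only in the $S_2$ estimate and turns $k+1$ into $4k+1\le 6\sqrt{t}+1$---that the paper's proof leaves implicit.
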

\begin{proof} 
Let \smash{$X = \sum_{i = 1}^\rho X_i$,} where \smash{$\rank(F)  = \rho$} and each \smash{$X_i$} satisfies~\eqref{eq: rank1}. For each $i \leq k$, construct the approximant
\smash{$\tilde{X}_t = \sum_{i = 1}^k  X_i^{(s_i)},$} \smash{$s_i = k \! + \! 1\!  - \! i,$} where \smash{$X_i^{(s_i)}$} is constructed by applying \smash{$s_i = k \! +\! 1\! - \! i$} iterations of fADI to~\eqref{eq: rank1} using optimal ADI shift parameters (these parameters are different for each $i$).  It follows that \smash{$\sigma_{t + 1}(X) \leq \|X - \tilde{X}_t\|_2$}, and the proof consists of bounding the error \smash{$\|X - \tilde{X}_t\|_2 $}.  This can be done just as in Theorem~\ref{thm: circexp} if one uses the fact that $[a, b]$ can be contained in a disk with radius $\eta = (b-a)/2$ and center $z_0 = (b+a)/2$, so that Lemma~\ref{thm:Horn} is applicable.
 \end{proof}
As before, we have the following corollary when the singular values of $F$ and  $Z_k([-b,-a],[a,b])$ decay at potentially different geometric rates: 
 \begin{corollary} 
\label{cor: thm6corr}
Suppose that the assumptions of Theorem~\ref{thm: intervalexp} hold, except that  \smash{$\sigma_{i+1}(F) \leq K \mu_F^{-i} \|F\|_2$}, with \smash{$\mu_F > 1$}.   Let \smash{$\mu = \min ( \mu_F, \mu_2 )$}, and define  $\ell$ as  \smash{$\ell = \left \lfloor  \log \left( \max ( \mu_F, \mu_2) \right) / \log \mu \right \rfloor$}.  Then, for  \smash{$1 \leq t =\ell k(k+1)/2 < n$}, we have 
  \begin{equation} 
 \label{eq:inte_explicitbndsL} 
 \sigma_{t + 1}(X) \leq \dfrac{K b}{a} (6 \sqrt{t} +1) \mu^{- \ell ( \sqrt{ 8t + 1} -1)/2} \|X\|_2.
\end{equation} 
\end{corollary}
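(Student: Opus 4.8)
The plan is to mimic the proof of Theorem~\ref{thm: intervalexp}, but to replace the single ``antidiagonal'' choice of fADI iteration counts by one that balances the two geometric rates $\mu_F$ (the decay of $\sigma_i(F)$) and $\mu_2$ (the decay of $Z_k([-b,-a],[a,b])$), in the spirit of the construction preceding Corollary~\ref{cor: thm4corr}. First I would set $\rank(F)=\rho$ and split $X=\sum_{i=1}^{\rho}X_i$ as in~\eqref{eq: rank1}. As in Theorem~\ref{thm: intervalexp}, $[a,b]$ lies inside the disk of centre $z_0=(b+a)/2$ and radius $\eta=(b-a)/2$, so $z_0-\eta=a$ and $z_0+\eta=b$; Lemma~\ref{thm:Horn} applied to~\eqref{eq: rank1} then gives $\|X_i\|_2\le\sigma_i(F)/(2a)\le K\mu_F^{-(i-1)}\|F\|_2/(2a)$, and $s$ fADI steps with the Zolotarev shifts extremal for $Z_s([-b,-a],[a,b])$ leave a residual bounded by $Z_s([-b,-a],[a,b])\,\|X_i\|_2\le 4\mu_2^{-s}\|X_i\|_2$ via~\eqref{eq: Zbndinterval}.

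Next I would build the approximant $\tilde X_t$ using the defining property of $\ell$, namely $\mu^{\ell}\le\max(\mu_F,\mu_2)<\mu^{\ell+1}$. If $\mu=\mu_2$ (so $\mu_F\ge\mu_2^{\ell}$), apply $s_i=\ell(k+1-i)$ fADI steps to the $i$th equation in~\eqref{eq: rank1} for $1\le i\le k$ and none to the rest. If $\mu=\mu_F$ (so $\mu_2\ge\mu_F^{\ell}$), partition the equations~\eqref{eq: rank1} into consecutive blocks of length $\ell$, treat block $j$ (indices $\ell(j-1)+1,\dots,\ell j$) as one equation with right-hand side of rank at most $\ell$, and apply $s_j=k+1-j$ fADI steps to it for $1\le j\le k$. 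In either case $\rank(\tilde X_t)\le\sum_{j=1}^{k}\ell(k+1-j)=\ell k(k+1)/2=t$, whence $\sigma_{t+1}(X)\le\|X-\tilde X_t\|_2$; as in Theorem~\ref{thm: circexp}, I would allow $k>\rho$ with the convention that absent terms contribute zero.

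Then I would estimate $\|X-\tilde X_t\|_2\le\|S_1\|_2+\|S_2\|_2$, where $S_1$ is the part carried by the untouched equations and $S_2$ is the sum of residuals of the processed equations (or blocks). Since $S_1$ satisfies~\eqref{eq:Sylv} with right-hand side formed from the tail singular terms of $F$ (those of index exceeding $k$, resp.\ $\ell k$), Lemma~\ref{thm:Horn} together with $\sigma_{i+1}(F)\le K\mu_F^{-i}\|F\|_2$ gives $\|S_1\|_2\le K\mu^{-\ell k}\|F\|_2/(2a)$ in both cases, using $\mu_F^{-k}\le\mu_2^{-\ell k}$ when $\mu=\mu_2$. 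For $S_2$, each residual is at most $4\mu_2^{-s}\|X_i\|_2\le(2K\|F\|_2/a)\,\mu_2^{-s}\mu_F^{-(i-1)}$ for the relevant index $i$, and the exponents $s$ were chosen precisely so that $\mu_2^{-s}\mu_F^{-(i-1)}\le\mu^{-\ell k}$ uniformly over the at most $k$ processed pieces; summing yields $\|S_2\|_2\le 2Kk\|F\|_2\mu^{-\ell k}/a$. Adding the two bounds, dividing by $\|X\|_2$ via $1/\|X\|_2\le(\|A\|_2+\|B\|_2)/\|F\|_2\le 2b/\|F\|_2$ (normality and $\lambda(A),\lambda(B)\subset[-b,b]$ force $\|A\|_2,\|B\|_2\le b$), and finally substituting $t=\ell k(k+1)/2$ and bounding $k$ by a constant multiple of $\sqrt t$ as in Theorem~\ref{thm: circexp}, I expect to recover~\eqref{eq:inte_explicitbndsL}.

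The main obstacle will not be analytic---the whole argument rests on Lemma~\ref{thm:Horn} and the estimate~\eqref{eq: Zbndinterval}, both already available, and the only new ingredient over Theorem~\ref{thm: intervalexp} is the rate-balancing scheme. The delicate part will be the bookkeeping: confirming that both case-dependent iteration-count choices consume exactly the rank budget $\ell k(k+1)/2$, verifying uniformly that the mixed exponents dominate $\mu^{-\ell k}$ (this is exactly where the definition of $\ell$ as $\lfloor\log\max(\mu_F,\mu_2)/\log\mu\rfloor$ is used), and carrying the constants and the $\sqrt t$ prefactor cleanly through the final substitution.
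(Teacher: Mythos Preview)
Your proposal is correct and follows essentially the same approach as the paper. The paper's proof simply refers back to the discussion preceding Corollary~\ref{cor: thm4corr}, which describes exactly the two-case rate-balancing construction you spell out: when $\mu_F\ge\mu_2$, inflate the iteration counts to $s_i=\ell(k+1-i)$; when $\mu_F<\mu_2$, group the rank-$1$ equations into blocks of length $\ell$ and apply $k+1-j$ fADI steps to block $j$. Your use of Lemma~\ref{thm:Horn} with the disk enclosing $[a,b]$ and the Zolotarev estimate~\eqref{eq: Zbndinterval} for the residual bound is likewise identical to the argument in Theorem~\ref{thm: intervalexp}.
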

\begin{proof} 
For a sketch of the proof, see the discussion preceding Corollary~\ref{cor: thm4corr}.
 \end{proof}
 
Related bounds can be stated when  $\lambda(A) \subset [a, b]$ and $\lambda(B) \subset [c, d]$, with $a < b < c < d$. In this case, with $F$ as in Corollary~\ref{cor: thm6corr},  we find that
\[
\sigma_{t + 1}(X) \leq  K \dfrac{{ \rm max}(|a|, |b|) +{ \rm max}(|c|, |d|)  }{|c-b|} (6 \sqrt{t} + 1) \mu^{- \ell (\sqrt{ 8t + 1} -1)/2} \|X\|_2,
\]
where $t$ is as in Corollary~\ref{cor: thm6corr}, $\mu = \min ( \mu_F,  \exp( \pi^2/ \log 16 \gamma ) )$, and $\gamma$ is the cross-ratio $|c-a| \; |d-b| /(  |c-b| \; |d-a|) $. This result is found by using a M{\"o}bius transformation that preserves \smash{$Z_k([a,b], [c,d])$} to map $[a,  b] \cup [c, d]$ to symmetric intervals $[-\alpha, -1] \cup [1, \alpha]$ (see~\cite{beckermann2016singular}), and  then applying Corollary~\ref{cor: thm6corr}.

\section{Examples}\label{sec:Examples}
The ideas and results in Section~\ref{sec:BoundsF} are connected to and inform a variety of other results. We give three examples that show how the splitting and bounding properties can be used. 

\subsection{The Hadamard product with a Cauchy matrix}\label{sec:CHP}
Let $A$, $B$ and $F$ satisfy the assumptions of Theorem~\ref{thm: circexp}.\footnote{Analogous results hold under the assumptions of Theorem~\ref{thm: intervalexp}, as well as the various generalizations of these theorems.}  Since $A$ and $B$ are normal matrices, they have eigendecompositions   \smash{$A = Y \Lambda_A Y^*$} and \smash{$B = W \Lambda_B W^*$.} Therefore,  $X$ in~\eqref{eq:Sylv} can be written  in closed form as
\begin{equation} 
\label{eq:hadamardsol}
 X  = Y \left(  C \circ ( Y^* F W) \right) W^*,
\end{equation}
where $C$ is a Cauchy matrix with entries \smash{$ C_{jk} = 1/( (\Lambda_A)_{jj}-(\Lambda_B)_{kk}) $}, and \smash{`$\circ$'} is the Hadamard  matrix product. Bounds on the singular values of $X$ can be determined using~\eqref{eq:hadamardsol}, rather than the method in Section~\ref{sec:BoundsF}. First, we split $AX -XB = F$ into the $\rho = \rank(F)$ equations  in~\eqref{eq: rank1}.
By~\eqref{eq:hadamardsol}, each \smash{$X_i$} in~\eqref{eq: rank1} can be expressed as
\begin{equation} 
\label{eq: splitHC}
X_i =  \sigma_i(F)  Y(C \circ (  Y^* u_iv_i^* W ))W^*. 
\end{equation}
 For each $i \leq k$, we use fADI on~\eqref{eq: Cauchysylv} to construct a rank \smash{$ \leq s_i = k \! + \! 1\! - \! i$}  approximant \smash{$C^{(s_i)}$}  to $C$. Substituting \smash{$C^{(s_i)}$} for $C$ in~\eqref{eq: splitHC} results in an approximant \smash{$X_{i}^{(s_i)}$}, and the sum of the matrices \smash{$X_i^{(s_i)}$} is an approximant to $X$.  This approach results in bounds of the form
\[ \sigma_{t +1}(X)  \leq 2 K\|C\|_2 (z_0 + \eta) ( \tfrac{3}{2} \sqrt{t} + 1) \mu_1 ^{-(\sqrt{8t+1}-1)/2} \|X\|_2 , \]
where \smash{$ 1 \leq t = k(k+1)/2  < n.$} This relates the singular values of $X$ to properties of the Cauchy matrix $C$. Generically, we have \smash{$\| C\|_2 \leq \sqrt{mn}/(2(z_0 - \eta))$} due to~\eqref{eq: Cauchysylv} and Lemma~\ref{thm:Horn}, but unfortunately, this does not result in bounds with an improved polynomial term when compared to~\eqref{eq:THM4bnd}. However, a more useful bound on $\|C\|_2$ may be available in specific cases.

This approach leads to an efficient algorithm for approximating $X$ in low rank form when fast matrix-vector products for $Y$ and $W$ are available (see Section~\ref{sec:Poisson} and also~\cite[Ch.~4.8]{golub2012matrix}).\footnote{To compute this approximant in low rank form, one uses the fact that for vectors $u_i = ( u_{1i}, \ldots, u_{mi})$ and $v_i$, 
\smash{ $ C \circ u_iv_i^*  = {\rm diag}(u_{1i}, \ldots, u_{mi}) C {\rm diag}(v_{1i}, \ldots, v_{ni})$}.}

\subsection{Families of structured matrices}
Let $C$ be a Cauchy matrix as in~\eqref{eq: Cauchysylv}  and define the family \smash{$\mathcal{F}_{m,n} =  \{ C^{\circ p} \}_{p = 1}^{\infty}$}, where \smash{$( C^{\circ p} )_{ij}  =  1/(z_i - w_j)^p$} and $m \geq n$.
For  $p \geq 2$, $C^{\circ p}$  satisfies the Sylvester equation \smash{$D_z C^{\circ p} -C^{\circ p}D_w = C^{\circ (p-1)}$}, and a recursive argument can be used to bound the singular values of each matrix in \smash{$\mathcal{F}_{m,n}$}. As an example, consider the matrix \smash{$C^{\circ 3}$}. For \smash{$C^{\circ 2}$}, Theorem~\ref{thm: circexp} can be applied directly, revealing that the singular values are bounded exactly as in~\eqref{eq:cauchy2bnds}. To bound the singular values of \smash{$C^{\circ 3}$}, define each \smash{$X_i$} so that it satisfies
\begin{equation}
\label{eq:c3breakup}
 D_z X_i-X_i D_w = \sum_{j = i(i+1)/2}^{ (i+1)(i+2)/2  -1} \sigma_j(C^{\circ 2}) u_jv_j^*,
 \end{equation}
 where \smash{$u_j$} and \smash{$v_j^*$} are the $j$th singular vectors of \smash{$C^{\circ 2}$}. The approximant \smash{$\tilde{X}_t$} to \smash{$C^{\circ 3}$} is constructed by applying \smash{$k \! + \!1\!-\! i$} fADI iterations to~\eqref{eq:c3breakup} for each $i \leq k$,  and then summing the resulting matrices. This is a variation on Theorem~\ref{thm: circexp} and results in a bound of the form 
 \begin{equation} 
 \label{eq:C3bnd}
 \sigma_{t+1}(C^{\circ 3}) \leq K_1  \mu_1 ^{-k } \|C^{\circ 3}\|_2 , \qquad 1 \leq t = \frac{1}{24} k (k+1)(k+2)(k+3)   < n,
 \end{equation} 
 where \smash{$K_1 = \mathcal{O}(\sqrt{n})$}.
It follows that as \smash{$n \to \infty$},  \smash{$\erank(C^{\circ 3}) = \mathcal{O} ( ( \log (\sqrt{n} / \epsilon ) ) ^4  )$.}  
As $p$ is increased, the bounds on the singular values of \smash{$C^{\circ p}$} become increasingly weak, but the bound on \smash{$\erank(C^{\circ p})$} always grows polylogarithmically with $n$.  This implies that for large enough $n$, the matrices in \smash{$\mathcal{F}_{m,n}$} are well-approximated by  low rank matrices. 
The set of $d$-dimensional, real-valued Vandermonde matrices satisfies a more complicated recursive relation that leads to similar bounds, and related results hold for various matrix families defined using the structured matrices in~\cite{beckermann2016singular}.

\subsection{A comparison to exponential sums}\label{sec:expsums}
Functions such as \smash{$f(x) = 1/x$} and \smash{$f(x) = \sqrt{x}$}, where $x \in [a, b] $,  and $0 \!< \!a < \!b \! < \!\infty$,  are well approximated by exponential sums of the form 
\[
S_k(x) = \sum_{j = 1}^k \alpha_j e^{ -t_j x},  \qquad \alpha_j, t_j \in \mathbb{R}.
\]   
In~\cite{braess2009efficient},  explicit bounds on the error  \smash{$\left \| 1/x - U_k(x) \right\|_{L_\infty ([a, b])}$} are given, where \smash{$U_k$} is the best approximation to \smash{$1/x$} by an exponential sum of at most $k$ terms. 
These estimates can be used to bound the singular values of \smash{$X \in \mathbb{R}^{n \times n}$}, where $X$ satisfies \smash{$AX + XA^T = BB^T$}, $\lambda(A) \subset [a,  b]$, and $A$ is a normal matrix~\cite{kressnerpersonal}. 

 Let \smash{$\hat{A} =  I_n \otimes A + A \otimes I_n$}, where \smash{`$\otimes$'} denotes the Kronecker matrix product. Then, \smash{${\rm vec}(X) = \hat{A}^{-1} {\rm vec}(BB^T)$,} where \smash{${\rm vec}(X) \in \mathbb{C}^{n^2 \times 1}$} is the column-major vectorization of $X$. We can use the fact that  \smash{$U_k(\hat{A}) \approx \hat{A}^{-1}$} to approximate $X$.  Results in~\cite{beckermann2016singular} and~\cite{braess2009efficient} give that 
\begin{equation}  
\label{eq:expsumbnd}
\left \| \hat{A}^{-1}- U_k(\hat{A}) \right\|_{2} \leq    \dfrac{K_\gamma k }{a}  \mu_2^{-k} ,  
\end{equation}
 where \smash{$K_\gamma$} is a bounded constant dependent on \smash{$\gamma = a/b$}, and \smash{$\mu_2$} is as in~\eqref{eq: Zbndinterval}.\footnote{In~\cite{braess2009efficient}, the left-hand side of~\eqref{eq:expsumbnd} is bounded by an expression involving the Gr\"otszch ring function~\cite[(19.2.8)]{olver2010nist}, which is associated with elliptic integrals. The more interpretable bound we give here follows from Theorem 3.1 in~\cite{beckermann2016singular}. } 

Let \smash{$\tilde{X}$} be the $n \times n$ matrix defined by ${\rm vec}(\tilde{X}) = U_k(\hat{A}){\rm vec}(BB^T)$. The property \smash{$\exp(  I_n \otimes A +  A \otimes I_n ) = \exp(A) \otimes \exp( A)$} can be used to show that \smash{$\rank(\tilde{X}) \leq  k \rho$}, where \smash{$\rank(BB^T) = \rho$}.  Since \smash{$\|X\|_F = \|{\rm vec}(X)\|_2$}, where \smash{$\| \cdot \|_F$} is the Frobenius norm, it follows that
\[ \sigma_{ k \rho+1}(X) \leq \left \| \hat{A}^{-1}{\rm vec}(BB^T) - U_k(\hat{A}) {\rm vec}(BB^T) \right \|_2 \leq \dfrac{K_\gamma k }{a} \mu_2^{-k} \; \| BB^T\|_F.\]
 To state a relative bound, we use the estimate \smash{$1/\|X\|_2 \leq 2b / \|BB^T\|_2$}, so that 
 \begin{equation} 
 \label{eq:expbondssvs}
 \sigma_{ k \rho+1}(X) \leq \tilde{K}_{\gamma} k  \mu_2^{-k} \; \| X\|_2, \quad  \tilde{K}_{\gamma} =  \dfrac{2 K_\gamma  b }{a} \dfrac{\|BB^T\|_F}{\|BB^T\|_2}.
 \end{equation} 
The bound attained via fADI is given  by~\cite[Cor. 4.1]{beckermann2016singular}, and states that
\begin{equation} 
\label{eq:ADIpick}
\sigma_{ k \rho+1}(X) \leq 4 \mu_2^{-k} \|X\|_2. 
\end{equation} 
The bounds in~\eqref{eq:expbondssvs} and~\eqref{eq:ADIpick} both achieve the same geometric decay rate, with the ADI-based bound resulting in a cleaner constant that does not include a factor of $k$. When $\rho$ is large, both bounds become problematic. 

However, the splitting and bounding properties can also be applied to improve bounds based on exponential sums~\cite{kressnerpersonal}.   For  $1\leq i \leq \rho$, let \smash{$X_i$} satisfy \smash{$AX_i +X_iA^T = \sigma_i(B)u_iu_i^T$},  and approximate each \smash{$X_i$} by the best sum of exponentials with at most \smash{$s_i$} terms. If the singular values of \smash{$BB^T$} decay at the geometric rate \smash{$\mu_2$}, then one should choose \smash{$s_i = k \!+ \!1 \!-i$}.  
It is clear from setting $\rho = 1$ in~\eqref{eq:expbondssvs} and~\eqref{eq:ADIpick} that bounds on the singular values of $X$ derived from exponential sums achieve the same decay rate as those in Theorem~\ref{thm: intervalexp}, but result in a larger algebraic factor involving $k$. 

\section{The FI-ADI method}\label{sec:FI-ADI}
The low rank approximations employed to bound singular values in Section~\ref{sec:BoundsF} can be automatically computed, resulting in an efficient method for approximately solving $AX - XB = F$ in low rank form  whenever $A$ and $B$ satisfy the assumptions in Theorem~\ref{thm: circexp} or Theorem~\ref{thm: intervalexp} (or their corollaries and generalizations), and  linear solves involving $A$ and $B$ can be performed cheaply (see Section~\ref{sec:poisson} for an application). We refer to this method as factored-independent ADI (FI-ADI).  An outline of the  FI-ADI method is given in the pseudocode below,\footnote{An implementation of FI-ADI in MATLAB is available at \url{https://github.com/ajt60gaibb/freeLYAP}.} where we assume the above conditions on $A$ and $B$ are met (see Sec~\ref{sec:genFIADI} for a generalization).  Key details for efficient implementation are described below.  

\begin{framed}
\begin{small}

\vspace{-.4cm}

 \begin{center} \textbf{\underline{The FI-ADI method}} \end{center}%

\vspace{-.25cm} 
 
\vspace{-.25cm} 
 
 \begin{description}[leftmargin=*] \setlength\itemsep{-.1cm}
  \item[] {\bf Input:}  $\circ$ $A \in \mathbb{C}^{m \times m}$, $B \in \mathbb{C}^{n \times n }$ , and $F$ satisfying $AX - XB =  F$, with \\ $F = \sum_{i = 1}^\rho \sigma_{i}(F) u_i v_i^*$
\item[]   \hspace{1.2cm} $\circ$ A tolerance $0 < \epsilon < 1$ 
\item[]   \hspace{1.2cm} $\circ$ Disjoint sets $E,G\subset\mathbb{C}$ such that $\lambda(A) \subset E$ and $\lambda(B) \subset G$
\item[] \hspace{1.2cm} $\circ$ A batch number $d$ and batching parameters $\{\ell_i\}_{i = 1}^{d+1}$, $\ell_{d+1} = \rho+1$.
\end{description} 

\vspace{-.25cm}

 \begin{description}[leftmargin=*] \setlength\itemsep{-.1cm}

\vspace{-.25cm} 

\item[] {\bf  Output:}  Factors $W$, $D$ and $Y$ satisfying \smash{$\|X - WDY^*\|_2 \approx \epsilon \|X\|_2$ }
\end{description}

\vspace{-.5cm} 

\begin{description}[leftmargin=*] \setlength\itemsep{-.1cm}
\item[1.] Split $\mathcal{S}:= AX - XB = \sum_{i = 1}^{\rho} \sigma_i(F) u_iv_i^*$ into $d$ equations: \\[-8pt]
\begin{equation} 
\label{eq:splitup}
\mathcal{S}_i : = AX_i - X_iB = \sum_{j = \ell_i}^{ \ell_{i+1}-1} \sigma_j(F) u_jv_j^*, \qquad  1 \leq i \leq d.
\end{equation} 

\vspace{-.3cm}

\item[2.] Find $\tau \approx \|X\|_2$ 
\item[3.] Set $W =[\phantom{z}]$, $D = [\phantom{z}]$, $Y = [\phantom{z}]$
 \item \hspace*{-.4cm} { \bf for}  $i = 1, \ldots, d$
\begin{description}[leftmargin=*] \setlength\itemsep{-.1cm} 
\item[(i)] Determine $s_i$ so that for $Z_{s_i}(E, G)$ in~\eqref{eq: rationalapprox},  
\vspace{-.3cm}
\begin{equation}
\label{eq:algbnd}
  Z_{s_i} (E, G) \leq \left( \epsilon \; \tau \; {\rm dist}(E, G)  \right) / \left(   d \; \sigma_{\ell_i}(F) \right), \quad {\rm dist}(E, G) = \min_{z \in E, w \in G}|z - w|
\end{equation}

\vspace{-.3cm}

\item[(ii)] Compute the set \smash{$\{ \alpha_{i,j}, \beta_{i,j}\}_{j = 1}^{s_i}$ }of optimal ADI shift parameters associated with $Z_{s_i}(E, G)$.
\item[(iii)] Apply $s_i$ steps of fADI to $\mathcal{S}_i$ to find $Z_i$, $D_i$ and $Y_i$
\item[(iv)] $W = \left[ \begin{array}{ c | c  } W & W_i  \end{array} \right]$, $Y= \left[ \begin{array}{ c | c  } Y & Y_i  \end{array} \right]$,  $D = { \rm diag}( D, D_i)$
\item[(v)] Compress $W$, $D$ and $Y$
\end{description}
\end{description}
\end{small}

\vspace{-.4cm}
\end{framed}

{\bf Error estimates.} As described in the pseudocode, the FI-ADI method constructs  \smash{$\tilde{X} = W D Y^*$.} If $\tau \leq \|X\|_2$, then \smash{ $\|X - \tilde{X}\|_2 \leq \epsilon \|X\|_2$}: by~\eqref{eq:Zolobnd}, Lemma~\ref{thm:Horn}, and the bound on \smash{$Z_{s_i}(E, G)$} in~\eqref{eq:algbnd},  \smash{$\|X_i - X_i^{(s_i)}\|_2 \leq ( \epsilon / d )\|X\|_2$}.
 A simple choice for $\tau$ is found using  \smash{ $\|F\|_2 \leq (\|A\|_2 + \|B\|_2) \|X\|_2$,}  but this  is often overly pessimistic. Settling for $\| X - \tilde{X}\|_2 \approx \epsilon \|X\|_2$, it is often more efficient to perform  a few steps of FI-ADI and then estimate $\tau$ using this approximant. We also find it effective in practice to choose the number of fADI steps for each $i$ as \smash{$s_i^* = \max( K_{max}, s_i)$}, where \smash{$s_i$} is computed as in Step (i) in the pseudocode and \smash{$K_{max}$} satisfies \smash{$Z_{K_{max}}(E, G) \leq \epsilon$. }

 {\bf Factorizations of $F$.} In Section~\ref{sec:BoundsF}, we used the SVD factorization $F = USV^*$ to derive bounds. This is also depicted in the pseudocode. However, the FI-ADI method works with any ``approximate SVD" of the form \smash{$F = \tilde{U} \tilde{\Sigma} \tilde{V}^*$}, where \smash{$\tilde{\Sigma}$} is diagonal with \smash{$\tilde{\Sigma}_{1,1} \geq  \cdots \geq \tilde{\Sigma}_{n,n}$}, and \smash{$\|\tilde{U}_{(:,i)}\tilde{V}_{(:,i)}^*\|_2 = 1$}. 

{\bf Computation of ADI shift parameters.} If $E$ and $G$ are disks in the complex plane, the required single shift parameter $(\alpha, \beta)$ is given by Theorem~\ref{thm: circexp}, a rotation mapping,  and the formula in~\cite{starke1992near}.  When $E$ and $G$ are closed real intervals, we refer the reader to the formulas in~\cite{lu1991solution}, as well as the MATLAB code in~\cite[Appendix~A]{fortunato2017fast}. For most  other choices of $E$ and $G$, heuristic shift selection strategies must be employed (see Section~\ref{sec:genFIADI}). 

{\bf Compression.}  The approximant \smash{$\tilde{X}$} is potentally a near-best low rank approximant to $X$ (see~\ref{sec:nearbest}), but in practice, \smash{$\tilde{X}$} can often be compressed. For large problems where memory is restrictive, an interim compression strategy (Step (v) in the pseudocode) is essential, and various schemes can be used (e.g.,~\cite{gugercin2003modified}). We apply the method from~\cite[Ch.~1.1.4]{bebendorf2008hierarchical}, where the skinny QR factorizations \smash{$ZD=Q_zR_z$} and \smash{$Y=Q_yR_y$} are used to find the truncated SVD of the small matrix \smash{$R_zR_y^*$.} 
The computational cost (in flops) of the compression step grows with the number of columns of $W$ as  \smash{$\mathcal{O} ( (m + n)t^2 + t^3)$}, where  $W$ has $t$ columns. It is beneficial to apply compression  after each iteration $i$ to keep the solution factors small. 

{\bf Batching linear solves.} Computational savings can be gained by grouping right-hand sides together when performing linear solves. For example, when the same ADI shift parameter  is used in every  fADI iteration for all \smash{$\mathcal{S}_i$} in~\eqref{eq:splitup}, the uncompressed factors \smash{$WDY^*$} are efficiently constructed  by applying $s_i$ fADI iterations to the equation \smash{$\sum_{j = 1}^{i} \mathcal{S}_j$} at each iteration $i$, with \smash{$s_{i-1} \geq s_{i}$}. 
Even when the shift parameters differ, efficiency is potentially  improved by grouping right-hand sides together in Step~1.  However, the cost of the compression step is also influenced by the batch sizes, and there is no simple choice of $d$ and \smash{$\{ \ell_i \}_{i = 1}^{d+1}$} that generically optimizes performance. 

{\bf FI-ADI versus fADI.} The FI-ADI method can be seen as a generalization of fADI, where more freedom has been permitted in the order that the rank 1 terms used to approximate $X$ are constructed. 
Using an  FI-ADI-based  method over fADI in  theoretical settings results in improved bounds on singular values of $X$ (see Section~\ref{sec:BoundsF}). However, the practical performance of either method depends greatly on implementation details, as well as the properties of $A$, $B$ and $F$. Vectorization and batched solves are  efficient, and fADI takes full advantage of this, whereas FI-ADI may not. The main practical benefit of FI-ADI is that re-ordering how rank 1 terms are constructed leads to an effective interim compression strategy.\footnote{A related idea is found in~\cite{gugercin2003modified}, where a reordering of terms (motivated by memory savings, not by the influence of the singular values of $F$) and an interim compression strategy results in an improved implementation of the low rank cyclic Smith method.}

\subsection{Generalized FI-ADI}
\label{sec:genFIADI}
We briefly review how an FI-ADI-based method can be used when the theorems and corollaries in Section~\ref{sec:BoundsF} are not applicable.  

{\bf Nonnormality.} Let $A$ and $B$ be diagonalizable but non-normal matrices, with eigendecompositions \smash{$A = V_A\Lambda_AV_A^{-1}$} and \smash{$B = V_B \Lambda_B V_B^{-1}$}.  The ADI error is bounded as
\smash{$ \|X-X^{(k)}\|_2 \leq \kappa_2(V_A) \kappa_2(V_B) \|r_k(\Lambda_A)\|_2 \; \|r_k(\Lambda_B)\|_2 \|X\|_2$,}
where \smash{$\kappa_2(M) = \|M\|_2  \|M^{-1}\|_2$}.  If bounds on \smash{$\kappa_2(V_A)$} and \smash{$\kappa_2(V_B)$} are known or can be numerically estimated, then the influence of these terms on the number of ADI steps can be estimated~\cite[Sec. 5]{townsend2016computing}.  Alternatively,  any spectral set~\cite{badea2013spectral} can be used to bound \smash{$\|r_k(A)\|_2\;  \|r_k(B)^{-1}\|_2$}~\cite[Cor. 2.2]{beckermann2016singular}.

{\bf Non-optimal shift selection.} If the sets $E$ and $G$ do not allow for optimal shift parameter selection, then one of many heuristic shift strategies may be applied~\cite[Ch.~4.4]{sabino2007solution}.  The use of suboptimal shifts affects convergence~\cite{simoncini2016computational}, and additional computational costs are incurred since either the ADI error equation or the residual equation must be monitored to determine convergence. We remark that only a few alternative schemes for solving $AX -XB = F$ when $\rank(F)$ is large have been proposed in the literature~\cite{kressner2016low, shank2014low}. When using FI-ADI, the residual error is  given by \smash{$\Delta_k(\mathcal{S}) : = \| r_k(A) F r_k(B)^{-1}\|_F$}~\cite{beckermann2011error}, where \smash{$\| \; \cdot \; \|_F$} is the Frobenius norm.  Using the submultiplicative property for \smash{$ \Delta_k(\mathcal{S}_i)$},  the influence of the singular values of $F$ can be exploited.

\section{A collection of low rank Poisson solvers} 
\label{sec:poisson}  In~\cite{fortunato2017fast}, spectral discretizations are developed so that the ADI method can be used to solve Poisson's equation on a variety of domains in optimal computational complexity (up to polylogarithmic factors). Combining these ideas with FI-ADI leads to highly efficient Poisson solvers that construct low rank approximations to solutions.  

\subsection{An FI-ADI--based Poisson solver on a square}\label{sec:Poisson}
Let $u$ be the solution to Poisson's equation on the square, i.e.,
\begin{equation} 
\label{eq:upoiss}
  \dfrac{\partial^2 u}{\partial x^2}  + \dfrac{\partial^2 u}{\partial y^2}  = f, \qquad x, y \in [-1, 1]^2, \qquad u(\pm 1,\cdot)  = u(\cdot, \pm 1)  = 0,
 \end{equation}
where $f$ is a smooth function on \smash{$[-1, 1]^2$}.  A standard numerical approach for finding $u$ involves discretizing~\eqref{eq:upoiss} using second-order finite differences.\footnote{This leads to a Lyapunov equation \smash{$D_2X + XD_2 = F$} that can be solved in only \smash{$\mathcal{O}(n^2 \log n)$} operations using the closed form solution in~\eqref{eq:hadamardsol} and the FFT~\cite[Ch.~4.8]{golub2012matrix}. Applying the FI-ADI method or using~\eqref{eq:hadamardsol} results in a fast low rank solver.} 
To achieve spectral accuracy, we instead apply the method in~\cite{fortunato2017fast}, where~\eqref{eq:upoiss} is discretized in a way that leads to the matrix equation \smash{$A \hat{X} - \hat{X} B= \tilde{D} \hat{F}\tilde{D}^T$}. Here, \smash{$\hat{X}$} and \smash{$\hat{F}$} contain scaled expansion coefficients for expressing $u$ and $f$, respectively, in a particular ultraspherical polynomial basis, and \smash{$\tilde{D}$} is diagonal. We refer the reader to~\cite[Sec.~3]{fortunato2017fast} for further details. This discretization is specifically designed for ADI-based approaches:  \smash{$A$} and \smash{$B$} satisfy the assumptions in Theorem~\ref{thm: intervalexp} and they are banded, so that linear solves involving them are cheap. For these reasons, FI-ADI is a highly efficient method for approximating \smash{$\hat{X}$} in low rank form. Recurrence relations among ultraspherical polynomials ensure that a rank $k$ approximation to $\hat{X}$ can be transformed to a convenient Chebyshev basis in \smash{$\mathcal{O}(k n \log n)$} operations~\cite{fortunato2017fast,olver2013fast}. The inverse transform is also fast, so that if $f$ is a smooth function, a low rank factorization of the matrix \smash{$\hat{F}$} can be found efficiently using methods in~\cite{townsend2014computing}.

The left panel of Figure~\ref{fig:poi} illustrates the computational savings gained from using the FI-ADI method to exploit the numerical rank of \smash{$\hat{X}$}.\footnote{ A faster implementation of both the FI-ADI and ADI-based solvers is achieved by performing the required linear solves with a subroutine written in C (see \url{https://github.com/danfortunato/fast-poisson-solvers}), which is not used here. The degrees of freedom in this experiment are increased artificially to demonstrate asymptotic complexity.} In this example, a matrix of bivariate Chebyshev coefficients for $f$ is given in low rank form for several choices of $f$. We use this to find $\hat{F}$ in low rank form. A low rank approximation to \smash{$\hat{X}$} is then computed and transformed to the Chebyshev basis. We compare this approach to the optimal complexity solver in~\cite{fortunato2017fast} that forms \smash{$\hat{F}$} explicitly, and then finds \smash{$\hat{X}$} in explicit form. 

The right panel displays a solution \smash{$\tilde{u}$} to~\eqref{eq:upoiss} computed in Chebfun~\cite{Chebfun} using this approach, where $f$ is smooth and its $512\times 512$ Chebyshev coefficient matrix  is approximated by a rank 206 matrix.  The exact solution is given by 
\[
u = (1-x^2)(1-y^2)  \sin( 3 \pi(1 + \cos( \pi x^2- \pi y^2 )) (x-2y)(2x+y) \cos( \pi x^2 + \pi y^2)).
\]
With the tolerance parameter set at \smash{$\epsilon = 10^{-10}$}, our approach results in an error of  \smash{$\|u -\tilde{u}\|_2/\|u\|_2 \approx  7.01 \times 10^{-11}.$} 

\begin{figure} 
 \centering
 \begin{minipage}{.46\textwidth} 
 \centering
  \begin{overpic}[width=\textwidth]{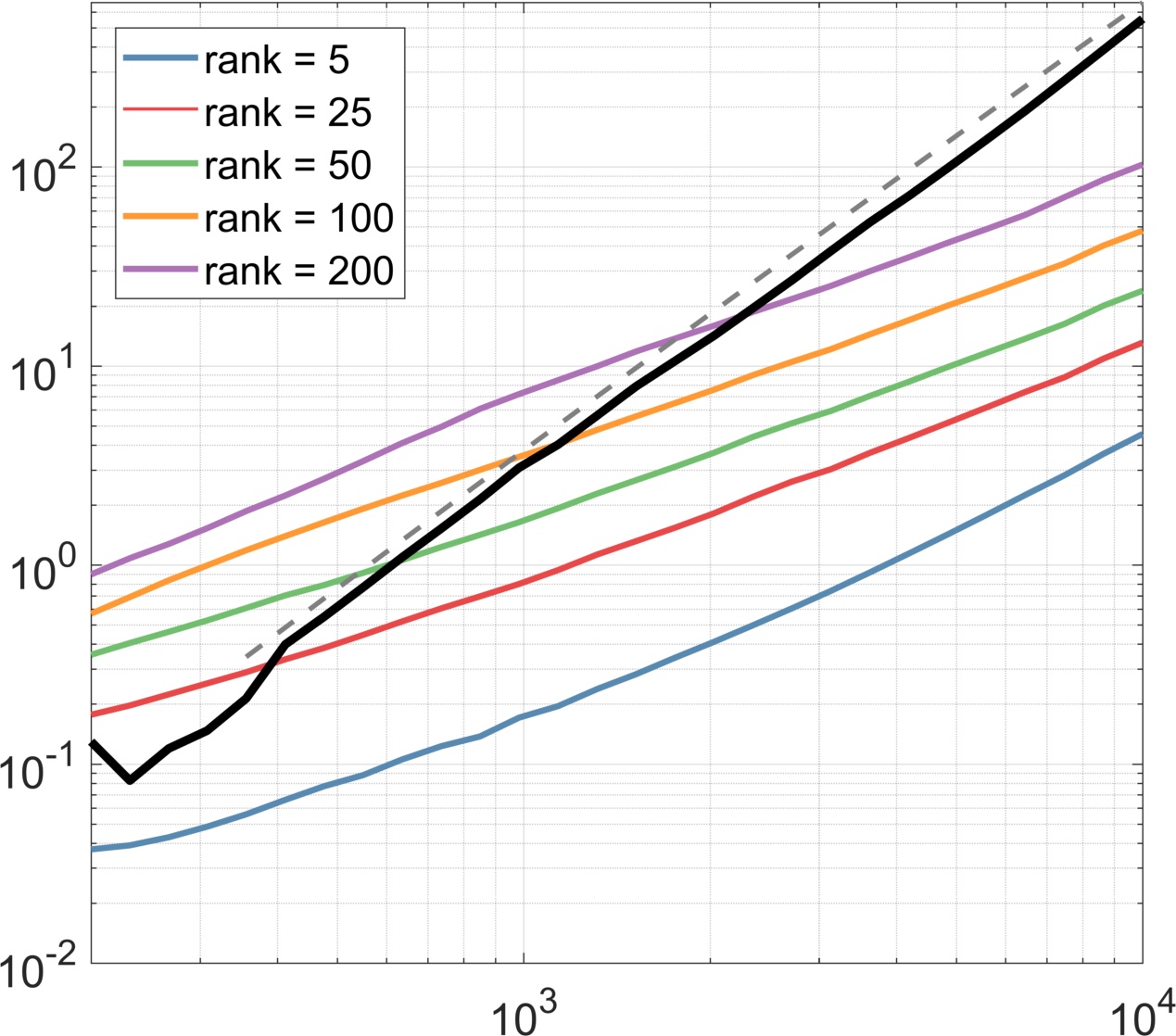}
  \put ( 46,-3){$n$}
  \put (54, 62){\rotatebox{36}{\small  $\mathcal{O}(n^2 (\log n )^2) $ }}
  \put(-6, 22){\rotatebox{90}{\small  execution time (s) }}
  \end{overpic}
  \end{minipage}
   \begin{minipage}{.44\textwidth} 
 \centering
  \begin{overpic}[width=.95\textwidth]{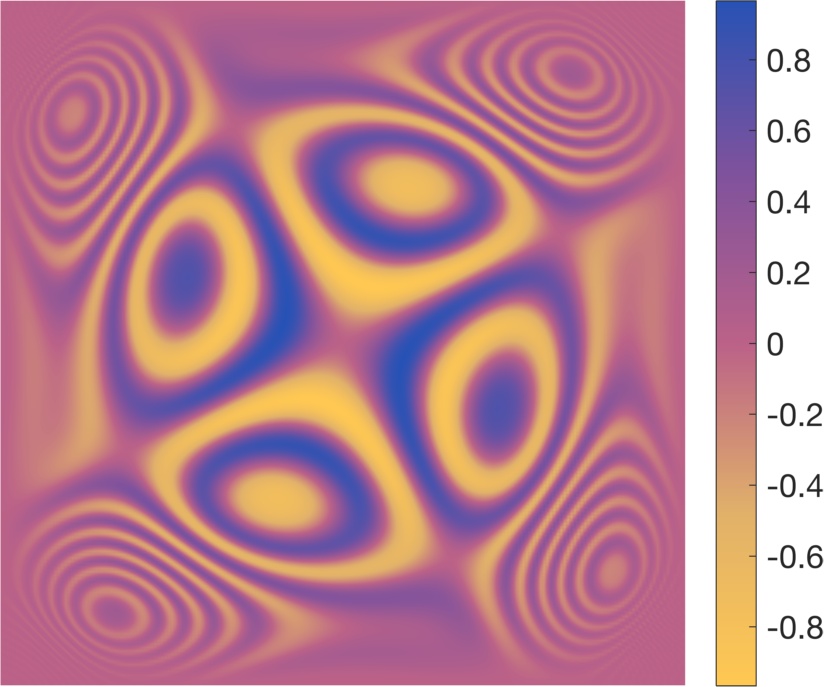}
  \end{overpic}
  \end{minipage}
  \caption{Left: The wall clock time in seconds is plotted against the problem size $n$ for computing the Chebyshev coefficients of an approximate solution to~\eqref{eq:upoiss}, with a relative tolerance of \smash{$1 \times 10^{-10}$}.  A low rank, FI-ADI--based solver with right-hand sides of varying rank (colored lines), all with rapidly decaying singular values, is compared against an optimal complexity solver that is unaffected by the rank of $F$ and returns $X$ in explicit form (black). The FI-ADI--based solver returns a low rank approximation  \smash{$WDY \approx X$}. Timings include compression of the factors $W$, $D$, and $Y$.  In both cases,  $F$ is provided in low rank form (with approximate singular values).   Right: The solution to~\eqref{eq:upoiss}, where $f$ is a smooth function with a \smash{$512 \times 512$} Chebyshev coefficient matrix $F$ of rank 206, computed with Chebfun using the spectral method in~\cite{fortunato2017fast} and FI-ADI. }
  \label{fig:poi}
  \end{figure}
  
\subsection{An FI-ADI--based Poisson for other domains}
This approach is not limited to a square domain: Combining FI-ADI with the discretizations described in~\cite{fortunato2017fast} and~\cite{wilber2017computing} leads to efficient low rank Poisson solvers for 2D functions on rectangles, disks, and on the surface of a sphere, and for 3D functions on solid spheres, cylinders, and cubes. We expect that similar solvers can be developed for many other elliptic PDEs. 

\section*{Acknowledgements} 
We thank Daniel Kressner for his correspondence with regards to Section~\ref{sec:expsums}. We are grateful to Dan Fortunato for  detailed comments on an early manuscript and code related to Section~\ref{sec:poisson}. We thank Anil Damle, Kathryn Drake, Marc Gilles, Daniel Kressner, and Nick Trefethen for their insightful responses to a draft, and we also thank an anonymous referee for many useful suggestions. 

\bibliographystyle{model1-num-names}
\bibliography{refs}

\begin{thebibliography}{38}
\expandafter\ifx\csname natexlab\endcsname\relax\def\natexlab#1{#1}\fi
\providecommand{\bibinfo}[2]{#2}
\ifx\xfnm\relax \def\xfnm[#1]{\unskip,\space#1}\fi
\bibitem[{Greengard and Rokhlin(1987)}]{greengard1987fast}
\bibinfo{author}{L.~Greengard}, \bibinfo{author}{V.~Rokhlin},
\newblock \bibinfo{title}{A fast algorithm for particle simulations},
\newblock \bibinfo{journal}{J. Comput. Phys.} \bibinfo{volume}{73}
  (\bibinfo{year}{1987}) \bibinfo{pages}{325--348}.
\bibitem[{Antoulas(2005)}]{antoulas2005approximation}
\bibinfo{author}{A.~C. Antoulas}, \bibinfo{title}{Approximation of large-scale
  dynamical systems}, \bibinfo{publisher}{SIAM}, \bibinfo{year}{2005}.
\bibitem[{Benner et~al.(2005)Benner, Mehrmann, and
  Sorensen}]{benner2005dimension}
\bibinfo{author}{P.~Benner}, \bibinfo{author}{V.~Mehrmann},
  \bibinfo{author}{D.~C. Sorensen}, \bibinfo{title}{Dimension reduction of
  large-scale systems}, volume~\bibinfo{volume}{45},
  \bibinfo{publisher}{Springer}, \bibinfo{year}{2005}.
\bibitem[{Cand{\`e}s and Recht(2009)}]{candes2009exact}
\bibinfo{author}{E.~J. Cand{\`e}s}, \bibinfo{author}{B.~Recht},
\newblock \bibinfo{title}{Exact matrix completion via convex optimization},
\newblock \bibinfo{journal}{Found. Comp. Math.} \bibinfo{volume}{9}
  (\bibinfo{year}{2009}) \bibinfo{pages}{717--772}.
\bibitem[{Beckermann and Townsend(2017)}]{beckermann2016singular}
\bibinfo{author}{B.~Beckermann}, \bibinfo{author}{A.~Townsend},
\newblock \bibinfo{title}{On the singular values of matrices with displacement
  structure},
\newblock \bibinfo{journal}{SIAM J. Matrix Anal. Appl.} \bibinfo{volume}{38}
  (\bibinfo{year}{2017}) \bibinfo{pages}{1227--1248}.
\bibitem[{Penzl(2000)}]{penzl2000eigenvalue}
\bibinfo{author}{T.~Penzl},
\newblock \bibinfo{title}{Eigenvalue decay bounds for solutions of {L}yapunov
  equations: the symmetric case},
\newblock \bibinfo{journal}{Systems \& Control Letters} \bibinfo{volume}{40}
  (\bibinfo{year}{2000}) \bibinfo{pages}{139--144}.
\bibitem[{Sabino(2007)}]{sabino2007solution}
\bibinfo{author}{J.~Sabino}, \bibinfo{title}{Solution of large-scale {L}yapunov
  equations via the block modified {S}mith method}, Ph.D. thesis, Rice
  University, \bibinfo{year}{2007}.
\bibitem[{Antoulas et~al.(2002)Antoulas, Sorensen, and
  Zhou}]{antoulas2002decay}
\bibinfo{author}{A.~C. Antoulas}, \bibinfo{author}{D.~C. Sorensen},
  \bibinfo{author}{Y.~Zhou},
\newblock \bibinfo{title}{On the decay rate of {H}ankel singular values and
  related issues},
\newblock \bibinfo{journal}{Systems \& Control Letters} \bibinfo{volume}{46}
  (\bibinfo{year}{2002}) \bibinfo{pages}{323--342}.
\bibitem[{Grasedyck(2004)}]{grasedyck2003existence}
\bibinfo{author}{L.~Grasedyck},
\newblock \bibinfo{title}{Existence and computation of low {K}ronecker-rank
  approximations for large linear systems of tensor product structure},
\newblock \bibinfo{journal}{Computing} \bibinfo{volume}{72}
  (\bibinfo{year}{2004}) \bibinfo{pages}{247--265}.
\bibitem[{Kressner and Uschmajew(2016)}]{kressner2016low}
\bibinfo{author}{D.~Kressner}, \bibinfo{author}{A.~Uschmajew},
\newblock \bibinfo{title}{On low-rank approximability of solutions to
  high-dimensional operator equations and eigenvalue problems},
\newblock \bibinfo{journal}{Linear Algebra Appl.} \bibinfo{volume}{493}
  (\bibinfo{year}{2016}) \bibinfo{pages}{556--572}.
\bibitem[{Simoncini(2016)}]{simoncini2016computational}
\bibinfo{author}{V.~Simoncini},
\newblock \bibinfo{title}{Computational methods for linear matrix equations},
\newblock \bibinfo{journal}{SIAM Review} \bibinfo{volume}{58}
  (\bibinfo{year}{2016}) \bibinfo{pages}{377--441}.
\bibitem[{Fortunato and Townsend(2017)}]{fortunato2017fast}
\bibinfo{author}{D.~Fortunato}, \bibinfo{author}{A.~Townsend},
\newblock \bibinfo{title}{Fast {P}oisson solvers for spectral methods},
\newblock \bibinfo{journal}{arXiv preprint arXiv:1710.11259}
  (\bibinfo{year}{2017}).
\bibitem[{Townsend et~al.(2016)Townsend, Wilber, and
  Wright}]{townsend2016computing}
\bibinfo{author}{A.~Townsend}, \bibinfo{author}{H.~Wilber},
  \bibinfo{author}{G.~B. Wright},
\newblock \bibinfo{title}{Computing with functions in spherical and polar
  geometries {I.} the sphere},
\newblock \bibinfo{journal}{SIAM J. Sci. Comput.} \bibinfo{volume}{38}
  (\bibinfo{year}{2016}) \bibinfo{pages}{C403--C425}.
\bibitem[{Wilber et~al.(2017)Wilber, Townsend, and
  Wright}]{wilber2017computing}
\bibinfo{author}{H.~Wilber}, \bibinfo{author}{A.~Townsend},
  \bibinfo{author}{G.~B. Wright},
\newblock \bibinfo{title}{Computing with functions in spherical and polar
  geometries {II.} the disk},
\newblock \bibinfo{journal}{SIAM J. Sci. Comput.} \bibinfo{volume}{39}
  (\bibinfo{year}{2017}) \bibinfo{pages}{C238--C262}.
\bibitem[{Benner et~al.(2009)Benner, Li, and Truhar}]{benner2009adi}
\bibinfo{author}{P.~Benner}, \bibinfo{author}{R.-C. Li},
  \bibinfo{author}{N.~Truhar},
\newblock \bibinfo{title}{On the {ADI} method for {S}ylvester equations},
\newblock \bibinfo{journal}{J. Comput. Appl. Math.} \bibinfo{volume}{233}
  (\bibinfo{year}{2009}) \bibinfo{pages}{1035--1045}.
\bibitem[{Sylvester(1884)}]{Sylvestermatrix}
\bibinfo{author}{J.~Sylvester},
\newblock \bibinfo{title}{Sur l'equations en matrices px=xq},
\newblock \bibinfo{journal}{C.R. Acad. Sci. {P}aris} \bibinfo{volume}{99}
  (\bibinfo{year}{1884}) \bibinfo{pages}{67--71}.
\bibitem[{Lu and Wachspress(1991)}]{lu1991solution}
\bibinfo{author}{A.~Lu}, \bibinfo{author}{E.~L. Wachspress},
\newblock \bibinfo{title}{Solution of {L}yapunov equations by alternating
  direction implicit iteration},
\newblock \bibinfo{journal}{Comput. \& Math. Appl.} \bibinfo{volume}{21}
  (\bibinfo{year}{1991}) \bibinfo{pages}{43--58}.
\bibitem[{Peaceman and Rachford(1955)}]{peaceman1955numerical}
\bibinfo{author}{D.~W. Peaceman}, \bibinfo{author}{H.~H. Rachford},
\newblock \bibinfo{title}{The numerical solution of parabolic and elliptic
  differential equations},
\newblock \bibinfo{journal}{J. Soc. Ind. Appl. Math.} \bibinfo{volume}{3}
  (\bibinfo{year}{1955}) \bibinfo{pages}{28--41}.
\bibitem[{Golub and Van~Loan(2012)}]{golub2012matrix}
\bibinfo{author}{G.~H. Golub}, \bibinfo{author}{C.~F. Van~Loan},
  \bibinfo{title}{Matrix computations}, volume~\bibinfo{volume}{3},
  \bibinfo{publisher}{JHU Press}, \bibinfo{year}{2012}.
\bibitem[{Akhiezer(1990)}]{akhiezer1990elements}
\bibinfo{author}{N.~I. Akhiezer}, \bibinfo{title}{Elements of the theory of
  elliptic functions}, volume~\bibinfo{volume}{79}, \bibinfo{publisher}{Amer.
  Math. Soc.}, \bibinfo{year}{1990}.
\bibitem[{Starke(1992)}]{starke1992near}
\bibinfo{author}{G.~Starke},
\newblock \bibinfo{title}{Near-circularity for the rational {Z}olotarev problem
  in the complex plane},
\newblock \bibinfo{journal}{J. Approx. Theory} \bibinfo{volume}{70}
  (\bibinfo{year}{1992}) \bibinfo{pages}{115--130}.
\bibitem[{Zolotarev(1877)}]{zolotarev1877application}
\bibinfo{author}{E.~Zolotarev},
\newblock \bibinfo{title}{Application of elliptic functions to questions of
  functions deviating least and most from zero},
\newblock \bibinfo{journal}{Zap. Imp. Akad. Nauk. St. Petersburg}
  \bibinfo{volume}{30} (\bibinfo{year}{1877}) \bibinfo{pages}{1--59}.
\bibitem[{Ganelius(1979)}]{ganelius1979rational}
\bibinfo{author}{T.~Ganelius},
\newblock \bibinfo{title}{Rational functions, capacities and approximation},
\newblock \bibinfo{journal}{Aspects of contemporary complex analysis}
  (\bibinfo{year}{1979}) \bibinfo{pages}{409--414}.
\bibitem[{Gonchar(1969)}]{gonchar1969zolotarev}
\bibinfo{author}{A.~A. Gonchar},
\newblock \bibinfo{title}{{Z}olotarev problems connected with rational
  functions},
\newblock \bibinfo{journal}{Matematicheskii Sbornik} \bibinfo{volume}{120}
  (\bibinfo{year}{1969}) \bibinfo{pages}{640--654}.
\bibitem[{Wachspress(1962)}]{wachspress1962optimum}
\bibinfo{author}{E.~L. Wachspress},
\newblock \bibinfo{title}{Optimum alternating-direction-implicit iteration
  parameters for a model problem},
\newblock \bibinfo{journal}{J. Soc. Ind. Appl. Math.} \bibinfo{volume}{10}
  (\bibinfo{year}{1962}) \bibinfo{pages}{339--350}.
\bibitem[{Smith(1968)}]{smith1968matrix}
\bibinfo{author}{R.~Smith},
\newblock \bibinfo{title}{Matrix equation {XA+BX=C}},
\newblock \bibinfo{journal}{SIAM J. Appl. Math.} \bibinfo{volume}{16}
  (\bibinfo{year}{1968}) \bibinfo{pages}{198--201}.
\bibitem[{Horn and Kittaneh(1998)}]{horn1998two}
\bibinfo{author}{R.~A. Horn}, \bibinfo{author}{F.~Kittaneh},
\newblock \bibinfo{title}{Two applications of a bound on the {H}adamard product
  with a {C}auchy matrix},
\newblock \bibinfo{journal}{Electron. J. Linear Algebra} \bibinfo{volume}{3}
  (\bibinfo{year}{1998}) \bibinfo{pages}{4--12}.
\bibitem[{Braess and Hackbusch(2009)}]{braess2009efficient}
\bibinfo{author}{D.~Braess}, \bibinfo{author}{W.~Hackbusch},
\newblock \bibinfo{title}{On the efficient computation of high-dimensional
  integrals and the approximation by exponential sums},
\newblock in: \bibinfo{booktitle}{Multiscale, nonlinear and adaptive
  approximation}, \bibinfo{publisher}{Springer}, \bibinfo{year}{2009}, pp.
  \bibinfo{pages}{39--74}.
\bibitem[{Kressner(2017)}]{kressnerpersonal}
\bibinfo{author}{D.~Kressner}, \bibinfo{year}{2017}. \bibinfo{note}{Personal
  communication}.
\bibitem[{Olver et~al.(2010)Olver, Lozier, and Boisvert}]{olver2010nist}
\bibinfo{author}{F.~W. Olver}, \bibinfo{author}{D.~W. Lozier},
  \bibinfo{author}{R.~F. Boisvert}, \bibinfo{title}{NIST handbook of
  mathematical functions}, \bibinfo{publisher}{Cambridge University Press},
  \bibinfo{year}{2010}.
\bibitem[{Gugercin et~al.(2003)Gugercin, Sorensen, and
  Antoulas}]{gugercin2003modified}
\bibinfo{author}{S.~Gugercin}, \bibinfo{author}{D.~C. Sorensen},
  \bibinfo{author}{A.~C. Antoulas},
\newblock \bibinfo{title}{A modified low-rank {S}mith method for large-scale
  {L}yapunov equations},
\newblock \bibinfo{journal}{Numer. Algor.} \bibinfo{volume}{32}
  (\bibinfo{year}{2003}) \bibinfo{pages}{27--55}.
\bibitem[{Bebendorf(2008)}]{bebendorf2008hierarchical}
\bibinfo{author}{M.~Bebendorf}, \bibinfo{title}{Hierarchical matrices, volume
  63 of Lecture Notes in Computational Science and Engineering},
  \bibinfo{publisher}{Springer-Verlag, Berlin}, \bibinfo{year}{2008}.
\bibitem[{Badea and Beckermann(2013)}]{badea2013spectral}
\bibinfo{author}{C.~Badea}, \bibinfo{author}{B.~Beckermann},
  \bibinfo{title}{Spectral Sets}, \bibinfo{publisher}{Chapman and Hall/CRC},
  \bibinfo{address}{Chapter 37 of L. Hogben, Handbook of Linear Algebra, second
  edition}, \bibinfo{year}{2013}.
\bibitem[{Shank(2014)}]{shank2014low}
\bibinfo{author}{S.~D. Shank}, \bibinfo{title}{Low-rank Solution Methods for
  Large-scale Linear Matrix Equations}, Ph.D. thesis, Temple University,
  \bibinfo{year}{2014}.
\bibitem[{Beckermann(2011)}]{beckermann2011error}
\bibinfo{author}{B.~Beckermann},
\newblock \bibinfo{title}{An error analysis for rational {G}alerkin projection
  applied to the {S}ylvester equation},
\newblock \bibinfo{journal}{SIAM J. Numer. Anal.} \bibinfo{volume}{49}
  (\bibinfo{year}{2011}) \bibinfo{pages}{2430--2450}.
\bibitem[{Olver and Townsend(2013)}]{olver2013fast}
\bibinfo{author}{S.~Olver}, \bibinfo{author}{A.~Townsend},
\newblock \bibinfo{title}{A fast and well-conditioned spectral method},
\newblock \bibinfo{journal}{SIAM Review} \bibinfo{volume}{55}
  (\bibinfo{year}{2013}) \bibinfo{pages}{462--489}.
\bibitem[{Townsend(2014)}]{townsend2014computing}
\bibinfo{author}{A.~Townsend}, \bibinfo{title}{Computing with functions in two
  dimensions}, Ph.D. thesis, University of Oxford, \bibinfo{year}{2014}.
\bibitem[{Driscoll et~al.(2014)Driscoll, Hale, and Trefethen}]{Chebfun}
\bibinfo{editor}{T.~A. Driscoll}, \bibinfo{editor}{N.~Hale},
  \bibinfo{editor}{L.~N. Trefethen} (Eds.), \bibinfo{title}{Chebfun Guide},
  \bibinfo{publisher}{Pafnuty Publications}, \bibinfo{address}{Oxford},
  \bibinfo{year}{2014}.

\end{thebibliography}

\appendix

\section{On the sharpness of FI-ADI-based error bounds}\label{sec:nearbest} 
Here, we demonstrate that there is a Sylvester matrix equation $AX - XB = F$ satisfying Theorem~\ref{thm: circexp} such that for all $t \leq \rho(\rho+1)/2$, where \smash{$\rank(F) = \rho$}, the rank $ \leq t$ approximant \smash{$\tilde{X}_t$} constructed as in Section~\ref{sec:BoundsF} satisfies \smash{$\|X - \tilde{X}_t\|_2 \approx \sigma_{t+1}(X)$}.  

Recall that in Section~\ref{sec:BoundsF},  an approximant \smash{$\tilde{X}_t$} is constructed in~\eqref{eq:Tnorms} as the sum of rank 1 matrices of the form \smash{$d_{ij}\vec{w}_{ij}\vec{y}_{ij}$}. If the sets \smash{$\{ \vec{w}_{ij}\}$ and $\{ \vec{y}_{ij}\}$}  each consist of mutually orthogonal vectors, then \smash{$\tilde{X}_t$} is not compressible.  We choose $A$, $B$ and $F$ carefully in the following example so that this condition is satisfied. The solution $X$ has a simple structure, and the tightness of the bound can be verified by directly examining its entries.  

Consider the equation \smash{$ AX + XA^T = F$}, where \smash{$A, F \in  \mathbb{R}^{n \times n}$} and \smash{$n = \rho^2$}.  The matrix $A$  is chosen to satisfy \smash{$(A+I)(A-I)^{-1} = Q$}, where $Q$ is the  scaled circulant shift matrix
\[
Q = \frac{1}{q}\begin{bmatrix} 0 & & & 1 \\ 1 \\ & \ddots \\ && 1 &  0  \end{bmatrix}, 
\]
with \smash{$q = \sqrt{2/c+1}$} and $c > 1$.  The matrix $A$ is normal, and its eigenvalues lie on the circle centered at \smash{$z_0 = -(q^2+1)/(q^2-1)$}
with radius \smash{$\eta = 2q/(q^2-1)$.} Applying $\ell$ iterations of  fADI constructs  \smash{$X^{(\ell)}$}.  By Theorem~\ref{thm:equidisk},  the error equation satisfies \smash{$ \|X - X^{(\ell)}\|_2 \leq q^{-2\ell}$}, with optimal ADI shift parameters given as \smash{$\alpha_j = -1, \beta_j=1 $} for all $j$.  From~\eqref{eq:Z} and~\eqref{eq:Y},  \smash{$X^{(\ell)}$} is given by
\begin{equation} 
\label{eq:Exsmiths}
X^{(\ell)} = 2\sum_{j = 0}^{\ell-1} Q^j (A-I)^{-1} F (A+I)^{-T}(Q^T)^{j}, \qquad \ell \geq 1.
\end{equation}
Now choose \smash{$F = [A-I]_{\left(\; : \; , \; 0 : \rho :  n-1\right)} \Lambda \left( [A+I]_{\left(\; : \; , \; 0 : \rho :  \rho (\rho-1) \right)} \right)^T,$} where indexing begins at $0$, and \smash{$\Lambda = {\rm diag}(1, q^{-2}, \ldots, q^{-2(\rho-1)})$}.

\subsection{A closed form solution} 
\label{sec:closedform}
The matrix $X$ is diagonal, and one can verify that its entries are given by
\begin{equation}
\label{eq:entries}
d_{\rho i+j} = \frac{2+2q^{-2}\sum_{s=1}^{\rho-1} q^{-2((n-1)-s(\rho-1))}}{1-q^{-2n}} + 2\sum_{s=1}^{i} q^{-2((\rho-1)(i-s)+j+i)} \approx 2q^{-2(j+i)}
\end{equation}
for \smash{$0\leq i\leq  \rho  -1$} and \smash{$0\leq j\leq \rho-1$}.  The singular values of $X$, plotted in Figure~\ref{fig:nearbest}, are the absolute values of its diagonal entries, sorted in nonincreasing order. 
The rightmost estimate in~\eqref{eq:entries} shows that when \smash{$j+i\leq \rho-1$,} we expect \smash{$j \!+\!i\!+\!1$} entries of \smash{$d$} to be of  magnitude approximately \smash{$2q^{-2(j+i)}$}. This explains the step-like pattern observed in Figure~\ref{fig:nearbest}. 

\begin{figure}
\centering
 \begin{minipage}{.55\textwidth}
 \begin{overpic}[width=\textwidth]{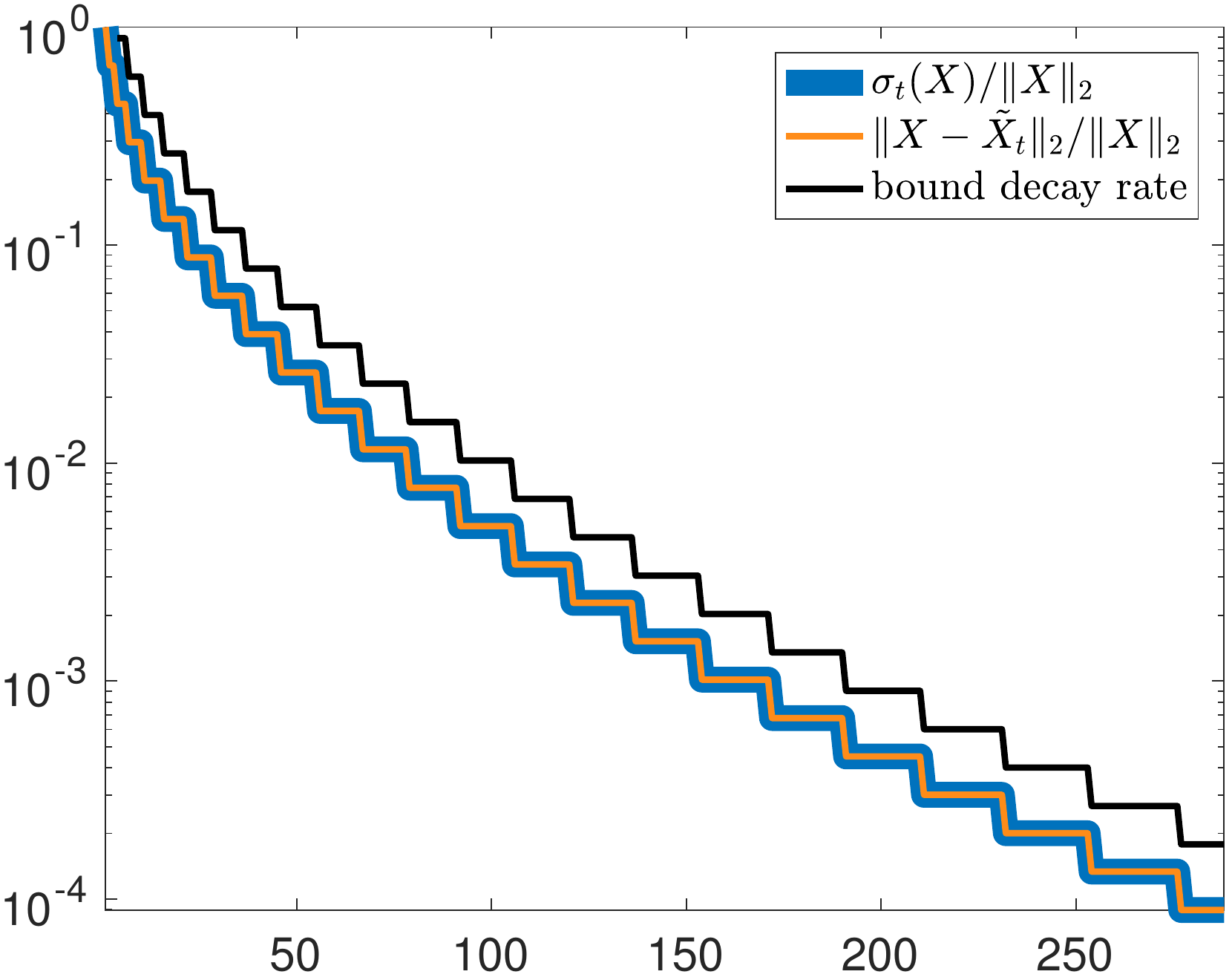}
\put(50,-4) {\small{$t$}} 
\end{overpic}
\end{minipage}
\caption{ The magnitude of normalized singular values for  \smash{$X$} is  plotted against the indices (blue). The error \smash{$\|X - \tilde{X}_t\|/ \|X\|_2$}, where \smash{$\tilde{X}_t$} is the rank $t$ approximant constructed with method used in Section~\ref{sec:Smiths} (orange) differs from the $t+1$ normalized singular value of $X$ only by a small constant factor (indistinguishable to the eye). The decay rate of the bound in Theorem~\ref{thm: circexp} (excluding the polynomial factor) is also shown (black).}
\label{fig:nearbest}
\end{figure}

\subsection{The approximant $\tilde{X}_t$} 
We now consider the approximant \smash{$\tilde{X}_t$} constructed as in Section~\ref{sec:BoundsF}.  For $0 \leq i \leq \rho-1$, let \smash{$X_i$} satisfy
\begin{equation} 
\label{eq:Xi}
AX_i + X_iA^T = \underbrace{\lambda_{i,i} \left[ A - I\right]_{( \; : \; , \; i\rho)}  \left[ A + I\right]_{(\; : \; , \; i\rho)}^T}_{F_i}, \qquad 0 \leq i \leq \rho-1,
\end{equation} 
where \smash{$\lambda_{i,i} = q^{-2i}$} is the $(i, i)$th entry of \smash{$\Lambda$}. Let $t = k(k+1)/2$ with $k \leq \rho$.  For $ i \leq k-1$, construct the approximants \smash{$X_i^{(s_i)}$, $s_i= k  \! - \! i$},  by applying \smash{$s_i$} fADI iterations to~\eqref{eq:Xi}, so that \smash{$\tilde{X}_t = \sum_{i = 0}^{k-1}  X_i^{(s_i)}$}. 
Using~\eqref{eq:Exsmiths} and recalling that $Q$ is a circulant shift matrix, each  \smash{$X_i^{(s_i)}$} is given by 
\[
X_i^{(s_i)} =  {\rm diag}( 0 , \ldots, 0, 2q^{-2i} D_{i}, 0 \ldots, 0),  \qquad X_i^{(s_i)} \in \mathbb{R}^{n \times n},
\]
where \smash{$D_i = {\rm diag}(1,q^{-2}, \ldots,q^{-2(k-i-1)}, 0, \ldots,0)\in\mathbb{R}^{\rho \times \rho }$}, and the first nonzero diagonal entry of \smash{$X_i^{(s_i)}$} is the $(i \rho, i \rho)$ entry of \smash{$D_i$}.  Comparing these matrices to $X$, we observe that the nonzero entries of $D_i$ in \smash{$X_i^{(s_i)}$}  satisfy $(D_i)_{j,j} \approx d_{i \rho+j} $. 

It follows that for $t = k (k +1)/2$, the error \smash{$\mathcal{E}_t : = X - \tilde{X}_t$} has the property that \smash{$\|\mathcal{E}_t\|_2 \approx 2q^{-2k}$}.  Inspecting the entries in $d$ reveals that \smash{$\sigma_{t+1}(X) \approx 2q^{-2k}$}, so that \smash{$\sigma_{t+1}(X)  \approx \|\mathcal{E}_t\|_2$} (see Figure~\ref{fig:nearbest}). As in Theorem~\ref{thm: circexp}, one can also bound non-triangular numbers  (see Section~\ref{sec:BoundsF}). When $t > \rho (\rho +1)/2$, the decay rate of the singular values of $X$ becomes supergeometric, and the approximation error \smash{$\|\mathcal{E}_t\|_2$} does not capture this behavior. 

\end{document}